\title{Unbiased Estimates for Gradients\\ of Stochastic Network Performance Measures\thanks{Acta Applicandae Mathematicae, 1993. Vol.~33, pp.~21-43}
}
\author{Nikolai Krivulin\thanks{Faculty of Mathematics and Mechanics, St.~Petersburg State University, Bibliotechnaya Sq.2, Petrodvorets, St.~Petersburg, 198904 Russia}}
\date{}
\newtheorem{theorem}{Theorem}
\newtheorem{lemma}[theorem]{Lemma}
\newtheorem{corollary}[theorem]{Corollary}
\newtheorem{axiom}{Axiom}
\newtheorem{definition}{Definition}
\newtheorem{example}{Example}
\begin{document}

\maketitle

\begin{abstract}
Three classes of stochastic networks and their performance measures are 
considered. These performance measures are defined as the expected value of
some random variables and cannot normally be obtained analytically as
functions of network parameters in a closed form. We give similar
representations for the random variables to provide a useful way of analytical
study of these functions and their gradients. The representations are used to
obtain sufficient conditions for the gradient estimates to be unbiased. The
conditions are rather general and usually met in simulation study of the
stochastic networks. Applications of the results are discussed and some
practical algorithms of calculating unbiased estimates of the gradients are
also presented.
\\

\textit{Key-Words:} stochastic network, stochastic optimization, gradient
estimation, perturbation analysis.
\end{abstract}

\section{Introduction}

Stochastic network models are widely used in modern engineering, management,
biology, etc., to investigate real systems. These models are often so
complicated that they can hardly be studied with the help of the analytical
methods only. A more fruitful way is to use computer simulation to analyze the
networks \cite{Erma75,HoYC87,Suri89}. By performing simulation experiments,
one may get a large amount of information about the network behaviour.

Usually, the main aim of the analysis is to improve a network performance. In
order to optimize a performance criterion with respect to network parameters,
one needs to evaluate it. Simulation provides estimating the criterion as well
as its sensitivity (or its gradient, when the parameters are continuous) in a
rather simple way. It is generally not difficult to obtain the estimates
provided that there exists a simulation model, although each simulation
experiment may be very time consuming.

There are many stochastic optimization procedures which use the data obtained
by simulation (see \cite{Erma75} and also a short survey in \cite{Glyn86}). In
many cases, the procedures that exploit gradient estimates are preferred to
those using estimates of the objective function only. Specifically, the
stochastic algorithms which apply unbiased estimates of gradient are often
highly efficient. As an example, one can compare the Robbins--Monro procedure
with the Kiefer--Wolfowitz one. It is well known \cite{Glyn86} that the first
procedure based on the unbiased estimates of gradient, converges to the
solution faster than the second one which approximates the gradient by the
finite differences.

In this paper, we analyze the problem of an unbiased estimation of the
gradient of stochastic network performance measures. The paper is based on the
results obtained in \cite{Kriv90a,Kriv90b}. In Section~\ref{sec1}, we describe
three classes of stochastic networks and give some examples of the networks
and their related optimization problems. We show that the sample performance
functions of the networks of all three classes may be represented in a similar
way. In fact, these functions are expressed through those given by using the
operations of maximum, minimum and addition.

Section~\ref{sec2} includes technical results which provide a general
representation for the sample performance functions of the networks.

In Section~\ref{sec3} we briefly discuss three methods of estimating
gradients, based on simulation data.

The main results are presented in Section~\ref{sec4}. First, we introduce a
set of functions for which one may obtain unbiased estimates of their
gradients. We prove some technical lemmae to state the properties of the set.
In conclusion, we give the conditions that prove the gradient estimates to be
unbiased. These conditions are rather general and usually fulfilled in
simulation studies of the stochastic networks.

In Section~\ref{sec5} some algorithms of calculating the gradient estimates are
described.

\section{Stochastic Networks and Related Optimization Problems} \label{sec1}

In this section we present three classes of stochastic networks and discuss
their related optimization problems. A performance criterion of the network is
normally defined as the expected value of a random variable,
$  f(\theta,\omega)  $, i.e.,
$$
F(\theta) = E_{\omega}[f(\theta,\omega)] = E[f(\theta,\omega)],
$$
where $  \theta \in \Theta \subset R^{n}  $ is a set of decision parameters
and $ \omega  $ is a random vector representing the randomness of network
behaviour. As a function of the parameters, $  f(\theta,\omega)  $ is
usually called a sample performance function.

The problem is to optimize the performance measure $  F(\theta)  $ with
respect to $  \theta \in \Theta $. In practical problems, it is often very
hard to evaluate the expectation analytically in closed form, even if there is
an analytical formula available for $  f(\theta,\omega) $. However, it is
normally not difficult to obtain the value of $  f(\theta,\omega)  $ for any
fixed $  \theta \in  \Theta  $ and any realization of $  \omega  $ by
using simulation. In that case, one can use the Monte Carlo approach to
estimate the objective function $  F(\theta)  $ or its gradient.

The main purpose of this section is to show that for many optimization
problems, the sample performance function $  f(\theta,\omega)  $ may be
represented in similar algebraic forms. In other words,
$  f(\theta,\omega)  $ is expressed in terms of some given random variables
by means of the operations $  \max $, $  \min $, and $  + $. This
representation offers the potential for analytical study of the estimates of
performance measure gradients. It also provides a theoretical background for
efficient algorithms of calculating the estimates.

\subsection{Activity Network}
We begin with stochastic activity network models widely used in corporate
management in the scheduling of large projects. Consider a project consisting
of some activities (or jobs) which must be done to complete it. Each activity
is presumed to require a random amount of time for performing it. It is not
permitted to begin each activity until some preliminary activities have been
performed. One is normally interested in reducing the expected completion time
of the whole project.

In order to describe the project as a network, we define an oriented graph
$  ({\bf N, A})$, where $  {\bf N}  $ is the set of nodes and
$ {\bf A}  $ is the set of arcs. Each node $  i \in {\bf N}  $ represents
the corresponding activity of the project. For some $  i, j \in {\bf N}$,
the arc $  (i,j) $ belongs to $  {\bf A}  $ if and only if the $i$th
activity must directly precede the $j$th one.

To simplify further formulae we define the set of the father nodes as
$  {\bf N}_{F}(i) = \{ j \in {\bf N} | (j,i) \in {\bf A} \} $, and the set
of the daughter nodes as
$  {\bf N}_{D}(i) = \{ j \in {\bf N} | (i,j) \in {\bf A} \}  $ for every
$  i \in {\bf N} $. In addition, we introduce the set of starting nodes
$  {\bf N}_{S} = \{ i \in {\bf N} | {\bf N}_{F} (i) = \emptyset \}  $ and
the set of the end nodes
$  {\bf N}_{E} = \{ i \in {\bf N} | {\bf N}_{D} (i) = \emptyset \}  $ of the
graph.

Now we have to define the duration of the activities, so that the network
would be completely described. Denote the duration of the $i$th activity by
$  \tau_{i} $, $  i \in {\bf N}$. We assume $  \tau_{i}  $ to be a
positive random variable, such that $  \tau_{i} = \tau_{i}(\theta,\omega) $,
where $  \theta \in \Theta  $ is a set of decision parameters and
$  \omega  $ is a random vector which represents the random effects involved
in realizing the project. The set
$  {\bf T} = \{ \tau_{i} | i \in {\bf N} \}  $ is presumed to be given.

The sample completion time of the $i$th activity may be expressed in the form
\begin{equation} \label{equ1}
t_{i}(\theta,\omega) = \left\{ \begin{array}{ll}
              \max_{j \in {\bf N}_{F}(i)} t_{j}(\theta,\omega) +
              \tau_{i}(\theta,\omega), &
                                \mbox{if $  i \not\in {\bf N}_{S} $}, \\
              \tau_{i}(\theta,\omega), &
                                \mbox{if $  i \in {\bf N}_{S} $}.
              \end{array}
\right.
\end{equation}
For the sample completion time of the whole project, we have
$$
t(\theta,\omega) = \max_{i \in {\bf N}_{E}} t_{i}(\theta,\omega).
$$
In that case, the expected completion time is
$$
T(\theta) = E[t(\theta,\omega)],
$$
and the problem is to minimize $  T(\theta)  $ with respect to
$  \theta \in \Theta $.

It is easy to see from (\ref{equ1}) that one can represent $  t  $ as a
function of $  \tau \in {\bf T}  $ by using the operations $  \max  $ and
$ +$. To illustrate this, consider the simple network depicted in
Figure~\ref{fig1}.
\begin{figure}[hhh]
\begin{center}
\begin{picture}(110,60)
\put(2,27){\circle{10}}
\put(1,26){1}
\put(27,52){\circle{10}}
\put(26,51){2}
\put(27,2){\circle{10}}
\put(26,1){3}
\put(77,52){\circle{10}}
\put(76,51){4}
\put(77,2){\circle{10}}
\put(76,1){5}
\put(102,27){\circle{10}}
\put(101,26){6}

\put(6,31){\vector(1,1){17}}
\put(6,23){\vector(1,-1){17}}
\put(32,52){\vector(1,0){40}}
\put(32,2){\vector(1,0){40}}
\put(31,6){\vector(1,1){42}}
\put(81,48){\vector(1,-1){17}}
\put(81,6){\vector(1,1){17}}

\end{picture}
\end{center}

\caption{An activity network.}\label{fig1}
\end{figure}
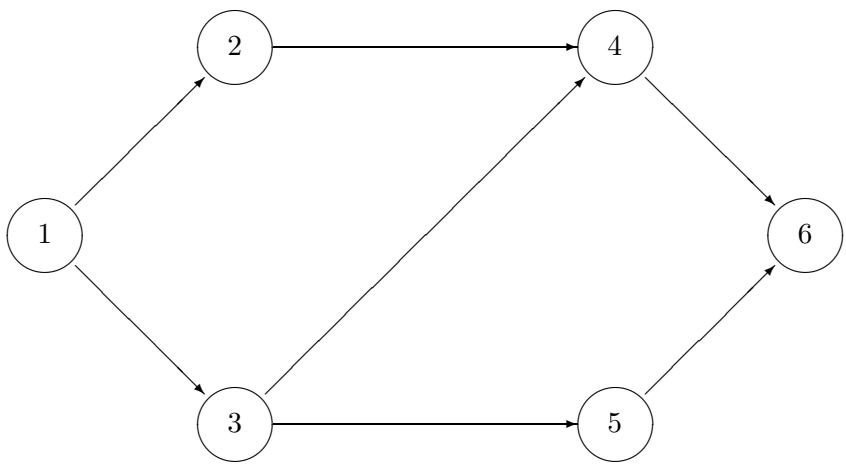

For this network, applying (\ref{equ1}) successively, we may write the sample
completion time as
$$
t = \tau_{1} + \max\{\max\{\tau_{2},\tau_{3}\}+\tau_{4},\tau_{3}+\tau_{5}\}
  + \tau_{6}.
$$
We will exploit the possibility of $  t  $ being expressed in such a form
in the discussion below.

We conclude this example with the remark about the main difficulty of the
activity network optimization problem. It is easy to understand that in the
case of general random variables $  \tau \in {\bf T} $, it is usually very
difficult or even impossible to obtain the expected completion time
analytically, even if the network is as simple as that in Figure~\ref{fig1}.
To apply an optimization procedure in this situation, one normally estimates
this function or its gradient by using the Monte Carlo approach. Notice,
however, that the simulation models of such networks are generally rather
simple.

\subsection{Reliability Network}
Another class of stochastic network models arises from the reliability
investigation of complex interconnected systems in engineering, military
research, biology etc. Consider a system of elements having bounded random
lifetimes. Each element keeps in order until either this element has failed or
all those directly supplying it have lost their working conditions. The whole
system is presumed to be in order if at least one of the main elements that
are supplied by some others but do not supply any element, keeps working. A
usual problem in analyzing the system is to maximize its expected lifetime.

Let $  ({\bf N, A})  $ be the directed graph describing the relations
between the system elements. In the graph, the set of nodes $  {\bf N}  $
corresponds to the set of system elements. If for some $  i,j \in {\bf N} $,
the $i$th element directly supplies the $j$th one, then
$  (i,j) \in {\bf A} $. For the graph, we retain the notations
$  {\bf N}_{F}(i), {\bf N}_{D}(i), {\bf N}_{S} $, and $  {\bf N}_{E}  $
introduced above.

For each element $  i \in {\bf N} $, we define the lifetime as the random
variable $  \tau_{i} (\theta,\omega)  $  which depends on the set of
decision parameters $  \theta \in \Theta $. Assume the set
$  {\bf T} = \{\tau_{i}\}  $ to be given. Now, we may represent the time for
the $i$th element to be in order as
\begin{equation}\label{equ2}
t_{i}(\theta,\omega) = \left\{ \begin{array}{ll}
              \min\{\max_{j \in {\bf N}_{F}(i)} t_{j}(\theta,\omega),
                                                 \tau_{i}(\theta,\omega) \}, &
                                \mbox{if $  i \not\in {\bf N}_{S} $}, \\
              \tau_{i}(\theta,\omega), & \mbox{if $  i \in {\bf N}_{S} $}.
              \end{array}
\right.
\end{equation}

The sample and expected lifetimes of the whole system may be written as
$$
t(\theta,\omega) = \max_{i \in {\bf N}_{E}} t_{i} (\theta,\omega) \qquad
\mbox{and} \qquad
T(\theta) = E[t(\theta,\omega)],
$$
respectively.

To illustrate this reliability network model, consider that depicted in
Figure~\ref{fig2} (\cite{Erma75}).
\begin{figure}[hhh]

\begin{center}
\begin{picture}(100,50)
\put(0,20){\framebox(10,10){1}}
\put(30,40){\framebox(10,10){2}}
\put(30,20){\framebox(10,10){3}}
\put(30,0){\framebox(10,10){4}}
\put(60,20){\framebox(10,10){5}}
\put(60,0){\framebox(10,10){6}}
\put(90,20){\framebox(10,10){7}}

\put(10,25){\vector(1,1){20}}
\put(10,25){\vector(1,0){20}}
\put(10,25){\vector(1,-1){20}}
\put(40,45){\vector(1,-1){20}}
\put(40,25){\vector(1,0){20}}
\put(40,5){\vector(1,0){20}}
\put(70,25){\vector(1,0){20}}
\put(70,5){\vector(1,1){20}}

\end{picture}
\end{center}

\caption{A reliability network.}\label{fig2}
\end{figure}
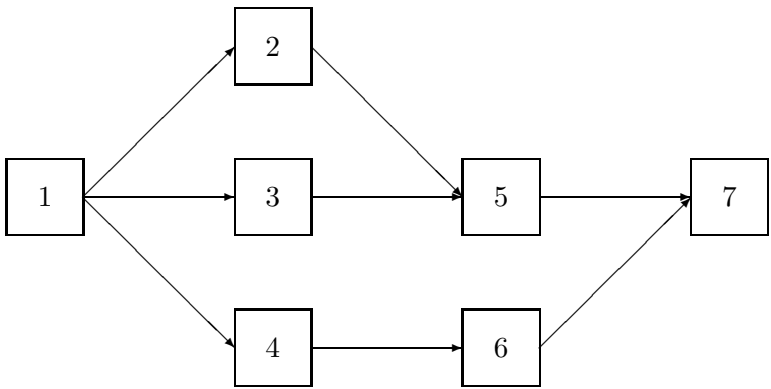

For the sample lifetime of the system, we have from (\ref{equ2})
$$
t = \min\{\tau_{1},\max\{\min\{\tau_{4},\tau_{6}\},
    \min\{\max\{\tau_{2},\tau_{3}\},\tau_{5}\}\},\tau_{7}\}.
$$

We can see that the sample lifetime of such a network has one important
property: it may be represented as a function of all $  \tau \in {\bf T}  $
by using only the operations $  \max  $ and $  \min $. Note that the
difficulties in solving the problem of expected lifetime maximization are the
same as in activity network optimization.

\subsection{Queueing Network}
Queueing networks provide a very rich class of stochastic network models.
These models play the key role in simulation study of computer systems,
communication networks, production lines, flexible manufacturing systems, etc.
In this part of the section, a general class of queueing networks is
considered and several performance measures of the network are defined.

The network which we describe consists of $  L  $ single--server nodes.
There are a server and a buffer with infinite capacity in each node $  i $,
$  i=1,\ldots,L$. Once a customer arrives into node $  i $, he occupies the
server if it is free. The server keeps busy a random amount of time until the
service of the customer has been completed. Upon the completion of its service
at node $  i $, the customer goes to node $  j $, chosen according to some
routing procedure described below. We suppose that the customer arrives
immediately into node $  j $.

The customer may find the server of node $  i  $ being busy. In that case,
he joins the queue at the node and is placed into the buffer. The discipline
in which queued customers are called forward for service is first--come,
first--served. We assume that at the initial time, all servers of the network
are free and there are $  n \; (0 \leq n \leq \infty)  $ customers in the
buffer at node $  i $, $  i=1,\ldots,L$. The customers are presumed to be of
a single class.

For the network, define the set of random variables
$  {\bf T} = \{\tau_{ij}\} $, where $  \tau_{ij}(\theta,\omega)  $ is the
service time of the customer that is the $j$th to initiate a service at node
$  i $. These random variables depend on the set of decision parameters
$  \theta \in \Theta  $ and a random vector $  \omega $, and they are
presumed to be given data.

Now, we discuss a routing mechanism of the network. A general routing
procedure may be defined by means of the set of random variables,
$  \Sigma = \{\sigma_{ij}\} $, where $  \sigma_{ij}(\omega)  $ represents
the next node to be visited by the customer who is the $j$th to depart from
node $  i $. Let $  s_{ij}  $ be a realization of
$  \sigma_{ij}(\omega)  $ for a fixed $  \omega $,
$  s_{ij} \in \{1,\ldots,L\} $. The matrix
$$
S = \left(
          \begin{array}{ccc}
            s_{11} & s_{12} & \ldots \\
            s_{21} & s_{22} & \ldots \\
            \ldots & \ldots & \ldots \\
            s_{L1} & s_{L2} & \ldots
          \end{array}
    \right)
$$
is referred to as a routing table of the network. In addition, denote the set
of all possible routing tables of the network by $  {\bf S} $.

Firstly, we consider a special case of the network to demonstrate the relation
between the queueing networks and those we have described above. Let us fix a
routing table $  S \in {\bf S}  $ and consider the network with the
deterministic routing procedure defined by $  S $. One can state a lot of
optimization problems of the network. The problems may differ in the
performance criteria to be optimized. In order to produce useful
representations of sample performance functions of the network, we introduce
the following notations. For every node $  i $, $  i = 1,\ldots,L $, let
\vspace{1ex}

$\alpha_{ij}  $ be the time of the $j$th arrival into the node,

$\beta_{ij}  $ be the time of the $j$th initiation of a service,

$\delta_{ij}  $ be the time of the $j$th departure from the node.
\vspace{1ex}

Obviously, for each node $  i  $ we may analytically represent the
relationship between these variables as follows
\begin{equation}\label{equ3}
\left\{
\begin{array}{lcl}
\delta_{ij} & = & \beta_{ij} + \tau_{ij}, \\
\beta_{ij}  & = & \max \{\alpha_{ij},\delta_{ij-1}\}, \quad j = 1,2,\ldots,
                                    \qquad \delta_{i0} \equiv 0.
\end{array}
\right.
\end{equation}
It should be noted that in the above identities each $  \alpha_{ij}  $
coincides with some $  \delta_{mk}  $ because the transition of customers
from one node to another is immediate.

One of the performance measures of the network is the expected value of the
$M$th service completion time at node $  K $
$$
D(\theta) = E[\delta_{KM}(\theta,\omega)]
$$
that we wish to minimize with respect to $  \theta \in \Theta  $ for given
$  K  $ and $  M $. Some other performance criteria one usually choose to
optimize will be defined below.

The following fact is of great importance. For the network with a
deterministic routing procedure, the sample completion time
$  \delta_{ij}  $ may be represented as a function of
$  \tau \in {\bf T}  $ using the operations $  \max $, $  \min $, and
$  +  $ for every $  i, j  $ provided that this service occurs.

To illustrate this, consider an example of network with three nodes and the
routing table
$$
S = \left(
      \begin{array}{ccccc}
        2 & 1 & 1 & 3 & \ldots \\
        1 & 3 & 1 & 1 & \ldots \\
        2 & 3 & 1 & 2 & \ldots \\
      \end{array}
    \right).
$$

Assume $  n_{1} = n_{2} = n_{3} = 1  $ and choose a node of the network, say
node 2. Since there is a customer at node 2 at the initial time, using
(\ref{equ3}) we may write
$$
\begin{array}{lll}
\alpha_{21} = 0, & \beta_{21} = 0, & \delta_{21} = \tau_{21}.
\end{array}
$$

It is easy to see from $  S  $ that the customer who is the second to arrive
into node 2 will be one of those first serviced at both nodes 1 and 3. It will
be just that customer who completes his service earlier. In that case we have
$$
\begin{array}{lll}
  \alpha_{22} =\min \{\tau_{11}, \tau_{31} \}, &
  \beta_{22} = \max \{\alpha_{22}, \delta_{21} \}, &
  \delta_{22} = \beta_{22} + \tau_{22}.
\end{array}
$$

Based on $  S $, we also deduce that the other customer from these two will
be the third to arrive into node 2. Therefore, we may write
$$
\begin{array}{lll}
  \alpha_{23} =\max \{\tau_{11}, \tau_{31} \}, &
  \beta_{23} = \max \{\alpha_{23}, \delta_{22} \}, &
  \delta_{23} = \beta_{23} + \tau_{23}.
\end{array}
$$

Finally, for $  \delta_{23}  $ we get
$$
\delta_{23} = \max \{\max\{\tau_{11},\tau_{31}\},
          \max\{\min\{\tau_{11},\tau_{31}\},\tau_{21}\}+\tau_{22}\}+\tau_{23}.
$$

As we can see, there is the representation of the sample performance function
of the network like that we have pointed out in the previous examples. Notice
that it may be very difficult to obtain such representations in practice.
However, it is essential that the representation exists. We will give reason
for this fact in the next section.

Now we define some traditional performance measures of the queueing networks.
These definitions are also extended to the general network with the stochastic
routing procedures.

Suppose that we observe the network until the $M$th service completion at node
$K$, for given $  K, M $. As sample performance functions for node $  K  $
in the observation period, we consider the following:
\vspace{1ex}

$ t(\theta,\omega) = \frac{1}{M} \sum_{j=1}^{M} \; (\delta_{Kj}(\theta,\omega)
- \alpha_{Kj}(\theta,\omega)), \;$ the average total time per customer,
\vspace{1ex}

$ w(\theta,\omega) = \frac{1}{M} \sum_{j=1}^{M} \; (\beta_{Kj}(\theta,\omega)
- \alpha_{Kj}(\theta,\omega)), \;$ the average waiting time per customer,
\vspace{1ex}

$ u(\theta,\omega) = \sum_{j=1}^{M} \; \tau_{Kj}(\theta,\omega)/
\delta_{KM}(\theta,\omega), \;$ the average utilization per unit time,
\vspace{1ex}

$ c(\theta,\omega) = \sum_{j=1}^{M} \; (\delta_{Kj}(\theta,\omega) -
\alpha_{Kj}(\theta,\omega)) / \delta_{KM}(\theta,\omega), \;$
the average number of customers per unit time,
\vspace{1ex}

$ q(\theta,\omega) = \sum_{j=1}^{M} (\beta_{Kj} (\theta,\omega) -
\alpha_{Kj}(\theta,\omega)) / \delta_{KM}(\theta,\omega), \;$
the average queue length per unit time.
\vspace{1ex}

Denote the expected values of these sample functions with respect to
$  \omega  $ by $  T(\theta) $, $  W(\theta) $, $  U(\theta) $,
$  C(\theta) $, and $  Q(\theta) $, respectively. They are the performance
measures that one usually chooses to optimize the network.

Note that the sample performance functions are described in terms of elements
of the set $  \{\alpha\} $, $  \{\beta\} $, $  \{\delta\} $, and
$  \{\tau \} $. It will be shown in the next section that each of the random
variables $  \alpha $, $  \beta $, and $  \delta  $ is expressed by a
function of $  \tau \in {\bf T} $, using the operations $  \max $,
$  \min $, and $  + $. This circumstance is of great importance and it will
be necessary to study analytical properties of the performance measures and
their gradient estimates in Section~\ref{sec4}.

For the general routing procedure defined by the set $  \Sigma $, any of the
above sample performance functions may be represented as
\begin{equation}\label{equ4}
f(\theta,\omega)
        = \sum_{S \in {\bf S}} \; 1_{[\Sigma(\omega)=S]} f_{S}(\theta,\omega).
\end{equation}
Here $  1_{[\Sigma(\omega)=S]}  $ is the indicator of the event
$  \{\Sigma(\omega)=S\} $, and $  f_{S}(\theta,\omega)  $ is the sample
performance function that coincides with $  f(\theta,\omega)  $ provided the
routing procedure is deterministic and defined by the routing table $  S $.

It should be noted in conclusion, that the sample performance functions of the
queueing network are more difficult to take their expectation analytically
than those of the networks we have considered above.

\section{Algebraic Representations for the Networks}\label{sec2}

We have seen that the functions of network performance possess some algebraic
properties. The point is that they may be expressed as a function of given
random variables by means of the operations $  \max $, $  \min $, and
$  + $. For the activity networks and reliability, this follows directly from
recursive equations (\ref{equ1}) and (\ref{equ2}) and does not require any
special proofs. The possibility of such representations in describing of the
sample performance functions of the queueing network is not obvious. In this
section, we give the theorem that states the existence of the representation
in the case of the network with a deterministic routing procedure. Two
technical lemmae are also included in the section.

In order to simplify further formulae, we introduce the notations $  \vee  $
for maximum and $  \wedge  $ for minimum. In addition, we will use the sign
$  \bigvee \; (\bigwedge)  $ to represent an iterated maximum (minimum),
i.e.,
$$
\bigvee_{i=1}^{n}   x_{i} = x_{1} \vee \ldots \vee x_{n} \quad
\left(\bigwedge_{i=1}^{n}   x_{i} = x_{1} \wedge \ldots \wedge x_{n}\right).
$$

Let $  {\bf X}  $ be a set supplied with the operations $  + $, $  \vee $,
and $  \wedge $. Without loss of generality, we may consider $  {\bf X}  $
to be a set of real numbers. It is easy to extend the result of this section
to various sets of real--valued functions and random variables. We assume that
the traditional algebraic axioms are fulfilled in $  {\bf X} $. In
particular, we will use the following axioms.

\begin{axiom}\label{axi1}
Distributivity of maximum over minimum.
$$
\forall x,y,z \in {\bf X}, \quad
(x \wedge y)\vee z = (x \vee z)\wedge(y \vee z).
$$
\end{axiom}

\begin{axiom}\label{axi2}
Distributivity of minimum over maximum.
$$
\forall x,y,z \in {\bf X}, \quad
(x \vee y)\wedge z = (x \wedge z)\vee(y \wedge z).
$$
\end{axiom}

\begin{axiom}\label{axi3}
Distributivity of sum over maximum and minimum.
$$
\forall x,y,z \in {\bf X}, \quad
(x \vee y)+z = (x+z)\vee(y+z), \quad (x \wedge y)+z = (x+z)\wedge(y+z).
$$
\end{axiom}

The proof of the representation theorem for the queueing networks is based on
the next result. Let $  X = \{x_{1},\ldots,x_{n}\}  $ be a finite set of
real numbers. Suppose that we arrange its elements in order of increase, and
denote the $k$th smallest element by $  x_{(k)} $. If there are elements of
an equal value, we count them repeatedly in an arbitrary order.

\begin{lemma}\label{lem1}
For each $  k = 1,\ldots,n $, the value of $  x_{(k)}  $ is
given by
\begin{equation}\label{equ5}
x_{(k)} = \bigwedge_{I \in \Im_{k}} \; \bigvee_{i \in I} \: x_{i},
\end{equation}
where $  \Im_{k}  $ is the set of all $k$ subsets of the set
$  N = \{1,\ldots,n\} $.
\end{lemma}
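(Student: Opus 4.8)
The plan is to prove the identity \eqref{equ5} by showing that the right-hand side, call it $y_{k} = \bigwedge_{I \in \Im_{k}} \bigvee_{i \in I} x_{i}$, equals $x_{(k)}$ via two inequalities. First I would fix the ordering $x_{(1)} \le x_{(2)} \le \cdots \le x_{(n)}$ and observe that $\Im_{k}$ ranges over all $k$-element subsets of $N$. For the inequality $y_{k} \le x_{(k)}$: take the specific set $I_{0}$ consisting of the indices of the $k$ smallest elements, so that $\bigvee_{i \in I_{0}} x_{i} = x_{(k)}$; since $y_{k}$ is a minimum over all $I \in \Im_{k}$, it is at most this particular value, giving $y_{k} \le x_{(k)}$.

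For the reverse inequality $y_{k} \ge x_{(k)}$, I would argue that every $k$-element subset $I$ satisfies $\bigvee_{i \in I} x_{i} \ge x_{(k)}$. The key combinatorial point is a pigeonhole argument: a $k$-element subset $I$ cannot be contained in the set of indices of the $k-1$ smallest elements (which has only $k-1$ members), so $I$ must contain at least one index $i$ with $x_{i} \ge x_{(k)}$; hence $\bigvee_{i \in I} x_{i} \ge x_{(k)}$. Taking the minimum over all such $I$ preserves this bound, so $y_{k} \ge x_{(k)}$. Combining the two inequalities yields $y_{k} = x_{(k)}$.

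A subtlety worth addressing explicitly is the treatment of ties, since the statement says equal values are counted repeatedly in an arbitrary order; I would note that the pigeonhole argument only uses cardinalities of index sets and the monotonicity $x_{(1)} \le \cdots \le x_{(n)}$, so it is insensitive to how ties are broken — any fixed labelling of the positions works, and the value $x_{(k)}$ is well-defined regardless. The main obstacle, such as it is, is stating the pigeonhole step cleanly: one must be careful that "the indices of the $k-1$ smallest elements" is well-defined in the presence of ties, which is handled by committing to the arbitrary-but-fixed ordering from the statement. Everything else is a short direct verification, and the lattice axioms (Axioms~\ref{axi1}--\ref{axi3}) are not actually needed for this particular lemma — it is a purely order-theoretic fact about real numbers — though they will be what lets one later lift such identities to the algebraic setting $\mathbf{X}$.
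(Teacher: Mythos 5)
Your proof is correct and follows essentially the same route as the paper: both establish $y_k \le x_{(k)}$ by evaluating the maximum over the set of indices of the $k$ smallest elements, and $y_k \ge x_{(k)}$ by noting that any $k$-subset must contain an index $j$ with $x_j \ge x_{(k)}$. Your explicit pigeonhole phrasing and remark on ties merely spell out what the paper leaves as an "obviously," so there is no substantive difference.
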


\begin{proof}
Denote the set of indices of the first $  k   $ smallest
elements by $  I^{*} $. It is clear that
$  x_{(k)} = \bigvee_{i \in I^{*}} x_{i} $. Consider an arbitrary subset
$  I \in \Im_{k} $. Obviously, if $  I \neq I^{*} $, there is at least one
index $  j \in I  $ such that $  x_{j} \geq x_{(k)} $. Therefore, we have
$  x_{(k)} \leq \bigvee_{i \in I} x_{i} $. It remains to take minimum over
all $  I \in \Im_{k}  $ in the last inequality so as to get (\ref{equ5}).
\end{proof}

\begin{theorem}
Let $  S \in {\bf S}  $ be a fixed routing table. For the network with the
deterministic routing procedure defined by $  S $, every $  \alpha_{ij} $,
$  \beta_{ij} $, and $  \delta_{ij}  $
$(i = 1,\ldots,L; \; j=1,2,\ldots) $ is represented as a function of
$  \tau \in {\bf T} $, using the operations $  \max $, $   \min  $ and
$  + $, provided that its associated service occurs.
\end{theorem}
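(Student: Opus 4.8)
The plan is to prove the representation by strong induction on the chronological rank of a service event among the service completions that take place in the network; the representation obtained will, in general, depend on the realization of $\omega$ (through this chronological order), which is consistent with the sample-path nature of the statement. The first step is to read off the combinatorial meaning of the arrival epochs. Once the routing table $S$ is fixed, the customers that ever arrive at node $i$ are exactly the $n_i$ customers present there at time $0$, which arrive at time $0$, together with the customers that leave some node $m$ as its $k$th departure with $s_{mk}=i$, which arrive at time $\delta_{mk}$ since transits are instantaneous. Because $\delta_{mk}=\beta_{mk}+\tau_{mk}>0$ by positivity of the service times, no external arrival occurs at time $0$; hence $\alpha_{ij}=0$ for $j\le n_i$, while for $j>n_i$ the epoch $\alpha_{ij}$ is the $(j-n_i)$th smallest of the numbers $\delta_{mk}$ over all pairs with $s_{mk}=i$ (equal values counted repeatedly in an arbitrary order, as in Lemma~\ref{lem1}).

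Now let $(i,j)$ be the $\ell$th completion in chronological order, and assume the claim for all earlier completions. By (\ref{equ3}), $\delta_{ij}=\beta_{ij}+\tau_{ij}$ with $\tau_{ij}>0$ and $\beta_{ij}=\alpha_{ij}\vee\delta_{i,j-1}$, so $\delta_{i,j-1}\le\beta_{ij}<\delta_{ij}$ and $\alpha_{ij}\le\beta_{ij}<\delta_{ij}$. Thus $\delta_{i,j-1}$ is either $0$ (when $j=1$) or a strictly earlier completion and hence, by the induction hypothesis, a function of $\tau\in{\bf T}$ built from $\vee,\wedge,+$. For $\alpha_{ij}$ with $j>n_i$, the customers realizing the $j-n_i$ smallest departure times among those routed to node $i$ are precisely the ones occupying arrival slots $n_i+1,\dots,j$ there, so each of their defining departures completes no later than $\alpha_{ij}<\delta_{ij}$ and is a strictly earlier completion. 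Let $P$ be the finite set of pairs $(m,k)$ with $s_{mk}=i$ that complete before $(i,j)$; since $P$ contains those $j-n_i$ departures and is contained in the set of all departures routed to $i$, the $(j-n_i)$th smallest of $\{\delta_{mk}:(m,k)\in P\}$ is still $\alpha_{ij}$, and Lemma~\ref{lem1} gives
$$
\alpha_{ij}=\bigwedge_{I\in\Im_{j-n_i}(P)}\ \bigvee_{(m,k)\in I}\delta_{mk},
$$
where $\Im_{j-n_i}(P)$ denotes the family of $(j-n_i)$-element subsets of $P$. Each $\delta_{mk}$ with $(m,k)\in P$ is representable by the induction hypothesis, hence so is $\alpha_{ij}$, and then $\beta_{ij}=\alpha_{ij}\vee\delta_{i,j-1}$ and $\delta_{ij}=\beta_{ij}+\tau_{ij}$ are as well, which closes the induction. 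For $\ell=1$ the above forces $j=1$ and $\alpha_{i1}=0$, hence $n_i\ge1$, $\beta_{i1}=0$, and $\delta_{i1}=\tau_{i1}$, so the induction has a sound base.

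The part I expect to be the real obstacle is the combinatorial bookkeeping rather than the algebra. Well-foundedness of the recursion — every term on the right-hand sides having strictly smaller chronological rank than $\delta_{ij}$ — drops out of $\tau_{ij}>0$ via (\ref{equ3}); the delicate point is the identification of $\alpha_{ij}$ (for $j>n_i$) with the $(j-n_i)$th order statistic of the departure times of the customers routed to node $i$. One must verify carefully that, with $S$ fixed, the external arrivals at node $i$ are exactly those customers and that they arrive in increasing order of their departure epochs, with ties resolved consistently with Lemma~\ref{lem1}'s convention, and that enlarging the pool from the $j-n_i$ relevant departures to the full finite set $P$ of already-completed departures routed to $i$ leaves the $(j-n_i)$th smallest value unchanged. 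With that settled, Lemma~\ref{lem1} does the essential work: it replaces the data-dependent choice of ``which customer arrives next'' by a fixed meet of joins of the $\delta$'s. Unfolding the induction through (\ref{equ3}) then exhibits every $\alpha_{ij}$, $\beta_{ij}$, $\delta_{ij}$ as a function of $\tau\in{\bf T}$ formed using only $\max$, $\min$, and $+$; since only finitely many completions precede any fixed event, the unfolding is finite.
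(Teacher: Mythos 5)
Your induction on the chronological rank of completions is sound as far as it goes, and it neatly secures the well-foundedness that the paper's sketch only asserts (``one can resolve this equation''). But the feature you flag yourself --- that the pool $P$ and hence the whole expression depend on the realization $\omega$ --- is exactly where the argument falls short of the theorem as the paper means and uses it. The point of the representation is to place every $\alpha_{ij}$, $\beta_{ij}$, $\delta_{ij}$ (and thence the sample performance functions) in the set $[{\bf T}]_{\cal A}$ generated by $\bf T$ under $\vee$, $\wedge$, $+$: a \emph{single} composition, determined by the routing table $S$ alone, valid simultaneously for (almost) all $\omega$, so that Lemma~\ref{lem3}'s normal form and Theorem~\ref{the11} can be applied to it. With your construction, different realizations of the event order produce genuinely different formulas (e.g.\ in the paper's three-node example, your $\alpha_{22}$ is $\tau_{11}$ for some $\omega$ and $\tau_{11}\wedge\tau_{31}$ for others, instead of the single expression $\tau_{11}\wedge\tau_{31}$), so you obtain a pointwise-in-$\omega$ representation but not membership of $\delta_{ij}(\cdot,\cdot)$ in $[{\bf T}]_{\cal A}$, which is what Section~5 needs when it concludes $t\in[{\bf T}]_{\cal A}$ and invokes unbiasedness.

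The paper avoids this by taking, in place of your realization-dependent $P$, the deterministic pool $\Delta(i,j)$ consisting of, for each node $k$, the first $j$ departures from $k$ that the table $S$ routes to $i$ (with $m<j$ for self-routed ones). Since departures from a fixed node occur in index order, this pool always contains the $j-n_i$ smallest departures routed to $i$, so Lemma~\ref{lem1} gives one fixed min--max formula for $\alpha_{ij}$ over subsets of $\Delta(i,j)$, and (\ref{equ6}) then yields a fixed implicit system for the $\delta$'s which is unwound into an explicit expression in $\tau\in{\bf T}$. Your argument can be repaired along these lines --- replace $P$ by $\Delta(i,j)$ and note the order statistic is unchanged for every realization --- but then the terms of $\Delta(i,j)$ need not be chronologically earlier completions, so your induction no longer closes as stated and you must supply the resolution of the resulting implicit min--max--plus recursion (the step the paper's sketch glosses). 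As it stands, the chronological-rank induction trades away precisely the uniformity in $\omega$ that the theorem is for.
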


\begin{proof}[Sketch of the proof.]
Consider a network of $  L  $ nodes with a
routing table $  S $. The recursive equations of the network are written as
\begin{equation}\label{equ6}
\begin{array}{lcl}
\delta_{ij} & = & \beta_{ij} + \tau_{ij}, \\
\beta_{ij}  & = & \alpha_{ij} \vee \delta_{ij-1}, \quad j = 1,2,\ldots,
                                    \quad \delta_{i0} \equiv 0,
\end{array}
\end{equation}
for each node $  i $, $  i = 1,\ldots,L $. The main idea of our proof is to
reduce these equations to the one that expresses the departure time
$  \delta_{ij}  $ through both the service time $  \tau_{ij}  $ and some
other departure times $  \delta_{km}  $
$(k \in \{1,\ldots,L\} $, $  m \in \{1,2,\ldots\})  $ only.

Let us first examine $  \alpha_{ij} $, i.e. the arrival time of the customer
who is the $j$th to come into node $  i $. It is plain that this customer is
one of those who are to leave some other nodes to go to node $  i $.
Therefore, $  \alpha_{ij}  $ coincides with one of the departure times
$  \delta_{km}  $ such that $  s_{km} = i $.

Consider the customers who have to go to node $  i  $ from some other, say
node $  k $. Clearly, we may restrict ourselves to the first $  j  $
customers because this is enough to provide the $j$th customer to come into
node $  i $. Denote the set of the times at which customers depart from node
$  k  $ by $  \Delta_{k} (i,j) $, $  k \neq i $.

It may happen that $  s_{im} = i  $ for some $  m $. This means that a
customer who is the $m$th serviced at node $  i  $ should join the queue
of the same node again. In this case, we have to take account of such
customers being served before the $j$th one only. The corresponding set of the
departure times is
$  \Delta_{i}(i,j) = \{\delta_{im} | s_{im} = i \; \mbox{and} \; m < j \} $.

Let $  \Delta(i,j) = \cup_{k=1}^{L} \Delta_{k}(i,j) $. Note that if there
are $  n_{i} > 0  $ customers in the buffer of node $  i  $ at the initial
time, then $  \alpha_{i1} = \cdots = \alpha_{in_{i}} = 0 $. It is plain that
for $  j > n_{i}  $ the arrival time $  \alpha_{ij}  $ coincides with the
$(j-n_{i})$th smallest element of $  \Delta(i,j) $. It follows from
Lemma~\ref{lem1} that $  \alpha_{ij}  $ is represented as a function of
elements of the set $  \Delta(i,j)  $ by using the operations $  \max $ and
$  \min $. In addition, using (\ref{equ6}) we may get such a representation
for $  \beta_{ij}  $ as a function of departure times
$  \delta \in \Delta(i,j) \cup \{\delta_{ij-1}\} $.

Finally, it follows from (\ref{equ6}) that there exists a representation of
$  \delta_{ij}  $ as the function of both $  \tau_{ij}  $ and the elements
of the set $  \Delta(i,j) \cup \{\delta_{ij-1} \}  $ which is a
superposition of the operations $  \max $, $  \min $, and $  + $. One can
resolve this equation and get $  \delta_{ij}  $ as a function of the service
times $  \tau \in {\bf T}  $ only. This produces the representation that the
theorem requires.
\end{proof}

We conclude this section with the following technical lemma that offers a
general form of the representation.

\begin{lemma}\label{lem3}
Let $  \varphi(x_{1},\ldots,x_{p})  $ be a function of the variables
$  x_{1},\dots,x_{p}  $ taking their values in $  {\bf X} $,
$  \varphi  $ is defined as a composition of the operations $  \vee $,
$  \wedge $, and $  + $. Then $  \varphi  $ can be represented as
$$
\varphi (x_{1},\ldots,x_{p} ) = \bigvee_{i \in I} \;
\bigwedge_{j \in J_{i}} \; \sum_{k=1}^{p} \alpha_{ij}^{k}   x_{k},
$$
where $  I  $  and $  J_{i}  $ for all $  i \in I  $ are finite
sets of indices, and all $  \alpha^{k}_{ij}  $ are integers.
\end{lemma}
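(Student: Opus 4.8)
The plan is to argue by structural induction on the way $\varphi$ is built up from the variables $x_1,\dots,x_p$ using $\vee$, $\wedge$, and $+$. I would call an expression of the form $\bigvee_{i\in I}\bigwedge_{j\in J_i}\sum_{k=1}^{p}\alpha_{ij}^{k}x_k$ with finite index sets and integer coefficients a \emph{normal form}, and show that the class of functions admitting a normal form contains the projections $x_k$ (trivially, with $|I|=|J_i|=1$ and $\alpha$'s equal to $0$ or $1$) and is closed under the three operations. Since $\varphi$ is by hypothesis such a composition, this suffices.

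The three closure steps are the heart of the argument. Closure under $\vee$ is immediate: the join of two normal forms is obtained by taking the union of the outer index sets. Closure under $+$ uses Axiom~\ref{axi3}: adding a fixed normal form $\psi=\bigvee_{r}\bigwedge_{s}L_{rs}$ to $\varphi=\bigvee_{i}\bigwedge_{j}L_{ij}$, I first push the outer $\vee$'s out by distributivity of $+$ over $\vee$, obtaining a join over pairs $(i,r)$ of terms $(\bigwedge_{j}L_{ij})+(\bigwedge_{s}L_{rs})$; each such term is then handled by distributivity of $+$ over $\wedge$, producing $\bigwedge_{(j,s)}(L_{ij}+L_{rs})$, and $L_{ij}+L_{rs}$ is again a linear form $\sum_k(\alpha_{ij}^k+\alpha_{rs}^k)x_k$ with integer coefficients. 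The delicate step is closure under $\wedge$: here I need to turn $\bigl(\bigvee_{i}\bigwedge_{j}L_{ij}\bigr)\wedge\bigl(\bigvee_{r}\bigwedge_{s}L_{rs}\bigr)$ back into join-of-meet shape. I would apply Axiom~\ref{axi2} (distributivity of $\wedge$ over $\vee$) repeatedly: first distribute the meet across the two outer joins to get $\bigvee_{i,r}\bigl(\bigl(\bigwedge_{j}L_{ij}\bigr)\wedge\bigl(\bigwedge_{s}L_{rs}\bigr)\bigr)$, and then observe that the inner part is simply $\bigwedge_{j\in J_i\sqcup J'_r}(\text{the corresponding }L)$ — a meet of linear forms — which is already in the required inner shape. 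Thus each operation preserves the normal form, completing the induction.

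The main obstacle I anticipate is purely bookkeeping rather than conceptual: one must be careful that the distributive laws as stated (for three elements) really do license the repeated, many-argument distributions used above, and that iterating them terminates with finite index sets — which it does, since each step only forms unions or products of already-finite sets and never introduces new variables. A secondary point worth a sentence is that the integrality of the coefficients is preserved throughout: the only arithmetic performed on coefficients is addition (in the $+$-closure step), and sums of integers are integers; the $\vee$- and $\wedge$-steps merely relabel linear forms without altering them. No genuine difficulty arises from the fact that $\varphi$ may nest the operations arbitrarily deeply, because structural induction on the formation tree of $\varphi$ reduces everything to the three atomic cases above.
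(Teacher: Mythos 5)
Your proof is correct and follows essentially the same route as the paper: induction on the structure of the expression, splitting at the outermost operation and using the distributivity axioms to restore the $\bigvee\bigwedge\sum$ normal form in each of the three cases. The only (inessential) differences are that you induct on the formation tree rather than on the number of variables after making all variable occurrences distinct, and that you spell out the distribution bookkeeping that the paper compresses into ``apply Axioms~\ref{axi1}--\ref{axi3}.''
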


\begin{proof}
Without loss of generality we suppose that there is no more than
one entry of each variable $  x_{1},\ldots,x_{p}  $ into the expression.
If some variable has two or more entries, we introduce additional ones so that
the above presupposition would be fulfilled. Let us prove the lemma by
induction on the number of variables.

For $  p=1 $, the statement of the lemma is obvious. If $  p=2 $, there are
three possibilities
$$
x_{1} \vee x_{2}, \quad
x_{1} \wedge x_{2} \quad \mbox{and} \quad x_{1} + x_{2},
$$
and it is clear that the statement is also true.

Assume that the statement of the lemma is true up to some value $  p-1 $.
Consider an expression $  \varphi  $ of $  p  $ variables. Clearly, there
is an operation in the expression that should be performed after the other
ones. Denote this operation by the asterisk $ *.$ In this case, we have
$  \varphi = \varphi_{1} * \varphi_{2} $, where $  \varphi_{1}  $ and
$  \varphi_{2}  $ are expressions such that each of them cannot include all
the variables $  x_{1},\ldots,x_{p} $. By the assumption, the statement of
the lemma holds for both $  \varphi_{1}  $ and $  \varphi_{2} $. Now, we
have three possibilities for the operation $  * $.
\begin{enumerate}
\item  $  \vee $. This is obvious.
\item  $  \wedge $. It is sufficient to apply Axiom~\ref{axi1}.
\item  $  + $. To obtain the representation in this case, one has to apply
successively Axioms~\ref{axi1}, \ref{axi2} and \ref{axi3}.
\end{enumerate}
Consequently, the statement of the lemma is true for
$  \varphi = \varphi_{1} * \varphi_{2} $.
\end{proof}

\section{Estimates of Gradient}\label{sec3}

To optimize the network performance measure
$  F(\theta)=E[f(\theta,\omega)] $, one often needs information about the
gradients $  \partial F(\theta)/\partial\theta $. In the absence of
analytical formulae for the gradient, Monte Carlo experiments may be performed
to estimate its values. There are three general methods of estimating
$  \partial F(\theta)/\partial\theta  $ based on data obtained by simulation
\cite{Erma75,Suri89,CaoX85}. In the first two methods, the gradient is
approximated by the finite differences and then estimated by using the Monte
Carlo approach. To illustrate these two methods, assume $  \theta  $ to be a
scalar and consider the following estimates:

The crude Monte Carlo (CMC) estimate:
$$
G_{CMC} = \frac{1}{N\Delta\theta} \sum_{i=1}^{N}
\left(f(\theta+\Delta\theta,\omega_{i})-f(\theta,\omega_{N+i})\right);
$$

the common random number (CRN) estimate:
$$
G_{CRN} = \frac{1}{N\Delta\theta} \sum_{i=1}^{N}
\left(f(\theta+\Delta\theta,\omega_{i})-f(\theta,\omega_{i})\right),
$$
where $  \omega_{i} $, $  i=1,\ldots,2N  $ are independent realizations of
the random vector $  \omega $. The second estimate differs from the first in
one respect: in the CRN estimate the random variables $  \omega_{i}  $ are
the same for both $  \theta+\Delta\theta  $ and $  \theta $, whereas in the
CMC estimate they are different. Note that each of them requires
$  2 \times N  $ simulation runs ($ N  $ at the original value
$  \theta  $ and $  N  $ at $  \theta+\Delta\theta$). Clearly, in the
case of the vector $  \theta \in R^{n} $, one must perform
$  (n+1)\times N  $ simulation experiments to get each estimate. In
\cite{Erma75} has shown that the finite difference estimates have the mean
square error (MSE) which is of order $  O(N^{-1/3})  $ for $  G_{CMC}  $
and $  O(N^{-1/2})  $ for $  G_{CRN}$.

We may somewhat improve the MSE properties of the estimate by using more
sophisticated finite difference formulae. However, the estimates become very
expensive in terms of computation time because they require a large number of
additional simulation experiments. For example, the following symmetric
difference estimate
$$
G_{CRN}^{SD} = \frac{1}{2N\Delta\theta} \sum_{i=1}^{N}
                 \left(f(\theta+\Delta\theta,\omega_{i})
             - f(\theta-\Delta\theta,\omega_{i})\right)
$$
requires $  2 \times n \times N  $ simulation runs, when
$  \theta \in R^{n}$).

An estimate of the third method can be written in the form
\begin{equation}\label{equ7}
G =  \frac{1}{N}
\sum_{i=1}^{N} \frac{\partial}{\partial\theta} f(\theta,\omega_{i}),
\end{equation}
provided that the gradient of the sample performance function (sample
gradient) exists. It should be noted that although we may obtain values of the
sample performance function by simulation, it can be rather difficult to
evaluate its gradient.

Recently, a new technique called infinitesimal perturbation analysis (IPA) has
been developed \cite{HoYC87} as an efficient method of obtaining gradient
information. The IPA method yields the exact values of the sample
gradient $  \partial f(\theta,\omega)/\partial\theta  $ by performing one
simulation run. The method is based on the analysis of the dynamics of the
network and closely connected with the simulation technique. Therefore, one
can easily combine an IPA procedure for calculating the sample gradient with a
suitable algorithm of network simulation. Such a procedure provides all the
partial derivatives of the sample gradient simultaneously during one
simulation run. Furthermore, it needs an additional computation cost which is
usually very small compared with that required for the simulation run alone.

The key question concerning the IPA method is whether it produces an unbiased
estimate of the performance measure gradient. It can easily be shown that if
$  \partial f(\theta,\omega)/\partial\theta  $ is an unbiased estimator of
$  \partial F(\theta)/\partial\theta  $ then estimate (\ref{equ7}) has MSE
which is of order $  O(N^{-1}) $. In short, in the case of unbiasedness, this
is a very efficient estimate which provides considerable savings in
computation.

In the next section, using the algebraic representation of Section~\ref{sec2},
we will examine properties of the network performance functions so as to
derive the conditions for estimate (\ref{equ7}) to be unbiased.

\section{A Theoretical Background of Unbiased Estimation}\label{sec4}

It is easy to understand that a sufficient condition for the estimate
(\ref{equ7}) of the gradient
$  \partial E[f(\theta,\omega)]/\partial\theta  $ at some
$  \theta \in \Theta  $ to be unbiased is
\begin{equation}\label{equ8}
\frac{\partial}{\partial\theta} E[f(\theta,\omega)] =
E\left[\frac{\partial}{\partial\theta} f(\theta,\omega)\right].
\end{equation}

Cao showed in \cite{CaoX85} that (\ref{equ8}) holds in the case of
$  f(\theta,\omega)  $ being uniformly differentiable at $  \theta  $
w.p.~1. Note that such a differentiability property is not easy to verify and
hard to interpret for practical systems. A useful way to prove the interchange
in (\ref{equ8}) is to apply the Lebesgue dominated convergence theorem
\cite{Loev60}. We use this theorem in the following form.

\begin{theorem}\label{the4}
Let $  (\Omega, {\cal F}, P)  $ be a probability space,
$  \Theta \subset R^{n}  $ and
$  f: \Theta \times \Omega \longrightarrow R  $ be a
${\cal F}$--measurable function for any $  \theta \in \Theta  $ and such
that the following conditions hold:
\begin{list}{}{}
\item[{\rm (i)}] for every $  \theta \in \Theta $, there exists
$  \partial f(\theta,\omega)/\partial\theta  $ at $  \theta  $ w.p.~1,
\item[{\rm (ii)}] for all $  \theta_{1},\theta_{2} \in \Theta $, there is a
random variable $  \lambda(\omega)  $ defined on the same probability space,
with $  E\lambda < \infty  $ and such that
\begin{equation}\label{equ9}
     |f(\theta_{1},\omega)-f(\theta_{2},\omega)| \leq \lambda(\omega)
                                       \|\theta_{1}-\theta_{2}\| \quad w.p.~1.
\end{equation}
\end{list}

Then equation (\ref{equ8}) holds on $  \Theta $.
\end{theorem}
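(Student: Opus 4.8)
The plan is to verify the hypotheses of the Lebesgue dominated convergence theorem applied to difference quotients. Fix $\theta \in \Theta$ and a unit vector $e$, and for a sequence $h_m \to 0$ define the random functions $g_m(\omega) = \bigl(f(\theta + h_m e, \omega) - f(\theta,\omega)\bigr)/h_m$. Condition (i) guarantees that $g_m(\omega) \to \partial f(\theta,\omega)/\partial\theta \cdot e$ pointwise w.p.~1 as $m \to \infty$. Condition (ii), with $\theta_1 = \theta + h_m e$ and $\theta_2 = \theta$, gives $|g_m(\omega)| \leq \lambda(\omega)$ w.p.~1, uniformly in $m$, and $E\lambda < \infty$ by hypothesis. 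Hence the dominated convergence theorem yields
$$
\lim_{m\to\infty} E[g_m] = E\Bigl[\lim_{m\to\infty} g_m\Bigr] = E\Bigl[\frac{\partial}{\partial\theta} f(\theta,\omega)\cdot e\Bigr].
$$
On the other hand, $E[g_m] = \bigl(E[f(\theta + h_m e,\omega)] - E[f(\theta,\omega)]\bigr)/h_m$ is exactly the difference quotient for $F(\theta) = E[f(\theta,\omega)]$, so its limit is the directional derivative $\partial F(\theta)/\partial\theta \cdot e$. Since $e$ was arbitrary and $h_m \to 0$ was an arbitrary null sequence, this establishes the existence of $\partial F(\theta)/\partial\theta$ and equation (\ref{equ8}) at $\theta$; since $\theta \in \Theta$ was arbitrary, it holds on all of $\Theta$.

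A few small points need attention along the way. One should first check that $F(\theta)$ is finite for every $\theta \in \Theta$: fixing a reference point $\theta_0 \in \Theta$, the Lipschitz bound (\ref{equ9}) gives $|f(\theta,\omega)| \leq |f(\theta_0,\omega)| + \lambda(\omega)\|\theta - \theta_0\|$ w.p.~1, so $E|f(\theta,\omega)| < \infty$ provided $f(\theta_0,\cdot)$ is integrable — which is implicit in the setup, since $F(\theta_0)$ is assumed to be a well-defined performance measure. One should also note that the partial derivative $\partial f(\theta,\omega)/\partial\theta$ is itself bounded by $\lambda(\omega)$ w.p.~1 (as a pointwise limit of the $g_m$), hence integrable, so the right-hand side of (\ref{equ8}) makes sense. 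The measurability of $g_m$ in $\omega$ follows from the assumed $\mathcal{F}$-measurability of $f(\theta,\cdot)$ for each $\theta$, and measurability of the limit is automatic.

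The only genuinely delicate point is that the dominated convergence theorem is a statement about sequences, whereas differentiability concerns a continuous limit $h \to 0$; this is handled by the standard device of reducing to an arbitrary sequence $h_m \to 0$, as done above, since a function has a limit at a point iff it has the same limit along every sequence approaching that point. I do not expect any real obstacle here — the theorem is essentially a direct packaging of dominated convergence, and the hypotheses (i)–(ii) have been chosen precisely to make the argument go through cleanly. The work in the paper lies not in this theorem but in later verifying, for the network sample performance functions built from $\max$, $\min$, and $+$, that a suitable integrable Lipschitz constant $\lambda(\omega)$ exists.
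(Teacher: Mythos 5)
Your proof is correct and follows exactly the route the paper intends: Theorem~\ref{the4} is stated there as a form of the Lebesgue dominated convergence theorem (with a citation, no written-out proof), and your argument via difference quotients dominated by $\lambda(\omega)$, reduced to arbitrary null sequences $h_m \to 0$, is the standard way to fill in those details. Your side remarks on integrability of $f(\theta_0,\cdot)$ and of the sample derivative are sensible and do not change the substance.
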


As an important consequence, we may state that the function
$  F(\theta) = E[f(\theta,\omega)]  $ is a Lipschitz one with a constant
$  L = E\lambda  $ and continuously differentiable on $  \Theta $, provided
$  f  $ satisfies the theorem conditions.

\begin{definition}\label{def1}
A function $  f(\theta,\omega)  $ defined on the probability space
$  (\Omega, {\cal F}, P)  $ at every $  \theta \in \Theta  $ belongs to
the set $  {\cal D}_{\Theta,\Omega}$ (or simply $  {\cal D}$) if and only if
it satisfies the conditions of Theorem~\ref{the4}.
\end{definition}

\begin{example}\label{exa1}
{\rm
Random variables which arise from a simulation study of networks, can be
treated as members of a family of random variables \cite{Erma75}. There are
few families one usually applies, namely the Exponential family, the Gaussian
family, etc. Various random variables of a family may be obtained from the
standard variable by using a suitable transformation. An ordinary way to
transform random variables is based on changing the location and scale
parameters.

Let $  \xi(\omega)  $ be the standard random variable of a family. Define
$$
f(\theta,\omega) = \theta_{1}\xi(\omega) + \theta _{2},
$$
where $  \theta = (\theta_{1},\theta_{2})^{\top} \in \Theta \subset R^{2}$.
Let us check whether it holds that $  f \in {\cal D} $. Obviously, the
partial derivatives of $  f  $ with respect to $  \theta_{1}  $ and
$  \theta_{2}  $ exist for almost all $  \omega  $ and equal
$$
\frac{\partial}{\partial\theta_{1}} f(\theta,\omega) = \xi(\omega) \quad
\mbox{and} \quad \frac{\partial}{\partial\theta_{2}} f(\theta,\omega) = 1.
$$
In addition, it is easy to verify that $  f  $ satisfies the condition~(ii)
of Theorem~\ref{the4} with $  \lambda = |\xi|+1 $. If $  E|\xi| < \infty $,
as is usually the case, then the conditions of Theorem~\ref{the4} are
fulfilled for $  f  $ and we have $  f \in {\cal D} $.
}
\end{example}

The next technical lemmae give the sufficient conditions for the arithmetic
operations and the operation $  \max  $ and $  \min  $ not to break the
main properties of the functions from $  {\cal D} $.

\begin{lemma}\label{lem5}
Let f,g $  \in {\cal D}  $ and let $  \lambda_{1}  $ and
$  \lambda_{2}  $ be the random variables that provide the condition~(ii) of
Theorem~\ref{the4} for $  f  $ and $  g $, respectively. Let
$  \mu_{1}, \mu_{2}  $ and $  \nu  $ be positive random variables. Then
the following are satisfied
\begin{list}{}{}
\item[{\rm (i)}] $  f+g \in {\cal D} $;
\item[{\rm (ii)}] If $  \alpha  $ is a bounded random variable, then
$  \alpha f \in {\cal D} $;
\item[{\rm (iii)}] If $  |f| \leq \mu_{1}  $ and $  |g| \leq \mu_{2}  $
hold w.p.~1 for any $  \theta \in \Theta  $ and
$  E[\lambda_{1}\mu_{2} + \lambda_{2}\mu_{1}] < \infty $, then
$  fg \in {\cal D} $;
\item[{\rm (iv)}] If $  |f| \leq \mu_{1}  $ and $  |g| \geq \nu  $ hold
w.p.~1 for any $  \theta \in \Theta  $ and
$  E[\mu_{1}\lambda_{2}/\nu^{2} + \lambda_{1}/\nu] < \infty $, then
$  f/g \in {\cal D} $.
\end{list}
\end{lemma}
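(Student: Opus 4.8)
The plan is to check, for each of the four constructions, that the resulting function again satisfies the two conditions of Theorem~\ref{the4}, so that by Definition~\ref{def1} it belongs to ${\cal D}$. Condition~(i) --- existence of the partial derivative w.p.~1 at each fixed $\theta$ --- is in every case immediate from the differentiability of $f$ and $g$ and the elementary rules of differentiation (sum, scalar multiple, product, and, since $|g|\ge\nu>0$ w.p.~1, quotient); the only remark needed is that the exceptional null set of the composite is contained in the union of the null sets coming from $f$ and $g$, and that measurability in $\omega$ of the composite at each fixed $\theta$ is automatic. Hence all the real work lies in producing, for each fixed pair $\theta_{1},\theta_{2}\in\Theta$, an integrable random variable $\lambda$ realizing inequality~(\ref{equ9}); in every case this comes from a single ``add and subtract'' identity.

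For (i) one takes $\lambda=\lambda_{1}+\lambda_{2}$: the triangle inequality gives $|(f+g)(\theta_{1},\omega)-(f+g)(\theta_{2},\omega)|\le(\lambda_{1}(\omega)+\lambda_{2}(\omega))\|\theta_{1}-\theta_{2}\|$ w.p.~1, and $E[\lambda_{1}+\lambda_{2}]<\infty$. For (ii), with $|\alpha|\le c$ a bound for $\alpha$, the choice $\lambda=|\alpha|\lambda_{1}$ works, since $|\alpha f(\theta_{1},\omega)-\alpha f(\theta_{2},\omega)|=|\alpha(\omega)|\,|f(\theta_{1},\omega)-f(\theta_{2},\omega)|\le|\alpha(\omega)|\lambda_{1}(\omega)\|\theta_{1}-\theta_{2}\|$ and $E[|\alpha|\lambda_{1}]\le cE\lambda_{1}<\infty$ (here $\alpha$ does not depend on $\theta$, so the derivative of $\alpha f$ is just $\alpha\,\partial f/\partial\theta$).

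For (iii) I would use $f_{1}g_{1}-f_{2}g_{2}=f_{1}(g_{1}-g_{2})+g_{2}(f_{1}-f_{2})$, where $f_{k}=f(\theta_{k},\omega)$ and $g_{k}=g(\theta_{k},\omega)$; bounding $|f_{1}|\le\mu_{1}$ and $|g_{2}|\le\mu_{2}$ (valid w.p.~1 at the respective fixed points) and applying the Lipschitz bounds for $f$ and $g$ gives $|f_{1}g_{1}-f_{2}g_{2}|\le(\mu_{1}\lambda_{2}+\mu_{2}\lambda_{1})\|\theta_{1}-\theta_{2}\|$ w.p.~1, so $\lambda=\mu_{1}\lambda_{2}+\mu_{2}\lambda_{1}$ serves, and $E\lambda<\infty$ is exactly the stated hypothesis. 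Part (iv) runs the same way with the roles of the bounds reversed: from $f_{1}/g_{1}-f_{2}/g_{2}=\big(g_{2}(f_{1}-f_{2})-f_{2}(g_{1}-g_{2})\big)/(g_{1}g_{2})$, using $|g_{1}g_{2}|\ge\nu^{2}$, cancelling one factor $|g_{2}|$ in the first term, and $|f_{2}|\le\mu_{1}$ in the second, one obtains $|f_{1}/g_{1}-f_{2}/g_{2}|\le(\lambda_{1}/\nu+\mu_{1}\lambda_{2}/\nu^{2})\|\theta_{1}-\theta_{2}\|$ w.p.~1, and $E[\mu_{1}\lambda_{2}/\nu^{2}+\lambda_{1}/\nu]<\infty$ by assumption. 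I do not expect a genuine obstacle here; the only thing that needs care is the bookkeeping in (iii) and (iv) --- tracking which of $f,g$ is bounded above (by a $\mu$) and which below (by $\nu$), and organizing the estimate so that the multiplier which finally appears is precisely the one whose expectation is assumed finite rather than a cruder upper bound.
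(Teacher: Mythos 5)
Your proposal is correct and takes essentially the same route as the paper: check measurability and a.s.\ differentiability directly, then verify condition (ii) of Theorem~\ref{the4} by an add-and-subtract/triangle-inequality estimate yielding exactly the Lipschitz random variable ($\lambda_{1}+\lambda_{2}$, $|\alpha|\lambda_{1}$, $\lambda_{1}\mu_{2}+\lambda_{2}\mu_{1}$, $\lambda_{1}/\nu+\mu_{1}\lambda_{2}/\nu^{2}$) whose expectation is assumed finite. The only difference is one of completeness: the paper writes out the estimate only for the product $h=fg$ and declares the remaining cases ``quite similar,'' whereas you carry out all four.
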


\begin{proof}
Clearly, $  f+g $, $  \alpha f $, $  fg  $ and $  f/g  $
are measurable functions of $  \omega  $ and differentiable ones on
$  \Theta  $ w.p.~1. Since for all of these functions the proofs of
inequality (\ref{equ9}) are quite similar, we verify it for only one of them.
For instance, we examine $  h = fg $.

For all $  \theta_{1}, \theta_{2} \in \Theta  $ we have
\begin{eqnarray*}
\lefteqn{|h(\theta_{1},\omega)-h(\theta_{2},\omega)|} \\
& = & |f(\theta_{1},\omega)g(\theta_{1},\omega)
                                - f(\theta_{2},\omega)g(\theta_{2},\omega)| \\
& = & |f(\theta_{1},\omega)g(\theta_{1},\omega)
  - f(\theta_{2},\omega)g(\theta_{1},\omega)
  + f(\theta_{2},\omega)g(\theta_{1},\omega)
  - f(\theta_{2},\omega)g(\theta_{2},\omega)| \\
& \leq &
  |g(\theta_{1},\omega)||f(\theta_{1},\omega) - f(\theta_{2},\omega)|
+ |f(\theta_{2},\omega)||g(\theta_{1},\omega) - g(\theta_{2},\omega)| \\
& \leq &
  (\lambda_{1}(\omega)\mu_{2}(\omega) + \lambda_{2}(\omega)\mu_{1}(\omega))
  \|\theta_{1} - \theta_{2}\| \quad \mbox{w.p.~1.}
\end{eqnarray*}
In short,
$$
|h(\theta_{1},\omega) - h(\theta_{2},\omega)| \leq
\lambda(\omega) \|\theta_{1} - \theta_{2}\| \quad \mbox{w.p.~1},
$$
where
$$
\lambda = \lambda_{1}\mu_{2} + \lambda_{2}\mu_{1}, \quad
E\lambda = E[\lambda_{1}\mu_{2} + \lambda_{2}\mu_{1}] < \infty.
$$
By Theorem~\ref{the4}, we conclude $  fg \in {\cal D}$.
\end{proof}

Notice, from Lemma~\ref{lem5} (i) and (ii) it follows that being closed for
the operations of addition and multiplication by bounded random variables,
$  {\cal D}  $ is a linear space of functions with these two operations.

\begin{lemma}\label{lem6}
Let $  f,g \in {\cal D} $. Suppose that for any $  \theta_{0} \in \theta $,
there exists a neighbourhood $  {\bf U}_{\omega}(\theta_{0})  $ of
$  \theta_{0}  $ w.p.~1 such that one and only one of the following
conditions
\begin{list}{}{}
\item[{\rm (i)}] $  f(\theta,\omega) = g(\theta,\omega) $,
\item[{\rm (ii)}] $  f(\theta,\omega) < g(\theta,\omega) $,
\item[{\rm (iii)}] $  f(\theta,\omega) > g(\theta,\omega) $
\end{list}
is satisfied for all $  \theta \in {\bf U}_{\omega}(\theta_{0}) $.

Then $  f \vee g \in {\cal D}  $ and $  f\wedge g \in {\cal D} $.
\end{lemma}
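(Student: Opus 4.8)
The plan is to verify the two conditions of Theorem~\ref{the4} for each of $h = f \vee g$ and $h = f \wedge g$. Condition~(i) — almost sure differentiability at each $\theta$ — will follow from the hypothesis: fixing $\theta_0$, on the neighbourhood $\mathbf{U}_{\omega}(\theta_0)$ exactly one of the three cases holds, so locally $h$ coincides with one of $f$, $g$, or the common value $f=g$; in each case $h$ agrees on a whole neighbourhood of $\theta_0$ with a function ($f$ or $g$) that is differentiable at $\theta_0$ w.p.~1, hence $h$ is differentiable there w.p.~1. The point of the trichotomy hypothesis is precisely to rule out the problematic ``crossing'' behaviour where $f$ and $g$ swap order at $\theta_0$ and $f \vee g$ develops a kink.

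The main work is condition~(ii), the Lipschitz-type bound~(\ref{equ9}). Here I would use the elementary pointwise inequalities, valid for all real $a,b,c,d$,
$$
|(a \vee b) - (c \vee d)| \leq |a-c| \vee |b-d|, \qquad
|(a \wedge b) - (c \wedge d)| \leq |a-c| \vee |b-d|,
$$
which hold without any ordering assumption (they follow from the $1$-Lipschitz property of $\max$ and $\min$ in the sup norm). Applying these with $a = f(\theta_1,\omega)$, $c = f(\theta_2,\omega)$, $b = g(\theta_1,\omega)$, $d = g(\theta_2,\omega)$ and then invoking condition~(ii) for $f$ and $g$ separately gives, w.p.~1,
$$
|h(\theta_1,\omega) - h(\theta_2,\omega)|
\leq \bigl(\lambda_1(\omega)\|\theta_1-\theta_2\|\bigr) \vee \bigl(\lambda_2(\omega)\|\theta_1-\theta_2\|\bigr)
= (\lambda_1(\omega) \vee \lambda_2(\omega))\,\|\theta_1-\theta_2\|.
$$
Setting $\lambda = \lambda_1 \vee \lambda_2 \leq \lambda_1 + \lambda_2$ we get $E\lambda \leq E\lambda_1 + E\lambda_2 < \infty$ since $f,g \in {\cal D}$, so condition~(ii) holds for $h$ with this $\lambda$. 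Measurability of $f \vee g$ and $f \wedge g$ in $\omega$ at each fixed $\theta$ is immediate from measurability of $f$ and $g$.

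Having checked both conditions, Theorem~\ref{the4} (equivalently Definition~\ref{def1}) yields $f \vee g \in {\cal D}$ and $f \wedge g \in {\cal D}$. I expect the only genuine subtlety to be the differentiability step: one must be careful that the neighbourhood $\mathbf{U}_{\omega}(\theta_0)$ on which the trichotomy holds is itself a null-set-exception construction, and that ``differentiable at $\theta_0$'' is a local notion, so agreement of $h$ with $f$ (or $g$) on any neighbourhood of $\theta_0$ transfers differentiability. The Lipschitz bound is routine once the right pointwise inequality for $\max$/$\min$ is written down — and it is worth noting this part does not even need the trichotomy hypothesis, so the full strength of the hypothesis is used only to secure w.p.~1 differentiability.
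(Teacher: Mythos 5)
Your proof is correct, and its overall skeleton is the same as the paper's: verify the two conditions of Theorem~\ref{the4} for $h=f\vee g$ (and symmetrically $f\wedge g$), using the trichotomy hypothesis only to secure a.s.\ differentiability, and producing the Lipschitz random variable $\lambda=\lambda_{1}\vee\lambda_{2}$ with $E\lambda\leq E\lambda_{1}+E\lambda_{2}<\infty$. Where you differ is in how the Lipschitz bound is obtained: the paper partitions $\Theta$ into ${\bf X}_{\omega}=\{f\geq g\}$ and ${\bf Y}_{\omega}=\{f<g\}$ and runs a case analysis over where $\theta_{1},\theta_{2}$ fall (including the cross case $\theta_{1}\in{\bf X}_{\omega}$, $\theta_{2}\in{\bf Y}_{\omega}$), whereas you invoke the single pointwise inequality $|(a\vee b)-(c\vee d)|\leq|a-c|\vee|b-d|$ (and its $\wedge$ analogue), which collapses that case analysis into one line and makes transparent that this step needs no ordering information whatsoever. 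Your closing observation that the trichotomy is used only for differentiability is accurate and is implicitly true of the paper's argument as well; the paper phrases the differentiability step negatively (the only failure modes are non-differentiability of $f$ or $g$, or every neighbourhood containing both points with $f=g$ and points with $f\neq g$, each of probability zero), while you phrase it positively ($h$ agrees with $f$, $g$, or their common value on a whole neighbourhood of $\theta_{0}$, and differentiability is local) --- the same idea, with yours slightly more constructive. One small point worth making explicit in a final write-up: the full-probability event you work on at a fixed $\theta_{0}$ is the intersection of the three full-probability events (existence of ${\bf U}_{\omega}(\theta_{0})$, differentiability of $f$ at $\theta_{0}$, differentiability of $g$ at $\theta_{0}$), which is again of probability one.
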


\begin{proof}
Consider $  h(\theta,\omega) = f(\theta,\omega) \vee g(\theta,\omega) $. It is clear
that $  h  $ is measurable with respect to $  \omega $. In order to prove
differentiability of $  h  $ w.p.~1 on $  \Theta $, we examine an arbitrary
$  \theta \in \Theta $. There are only two possibility for $  h  $ not to
be differentiable. Firstly, it is possible that the derivative of $  h  $ at
$  \theta  $ does not exist if at least one of the derivatives
$$
\left.\frac{\partial f(\theta,\omega)}{\partial\theta}
\right|_{\theta = \theta_{0}} \quad \mbox{and} \quad
\left.\frac{\partial g(\theta,\omega)}{\partial\theta}
\right|_{\theta = \theta_{0}}
$$
does not. In addition, $  h  $ may not be differentiable at $  \theta  $
if the maximum of the functions $  f  $ and $  g  $ changes over from
$  f  $ to $  g  $ at this point or vice versa. The last case is
equivalent to that there exists $  \omega \in \Omega  $ such that all the
neighborhoods $ {\bf U}_{\omega} (\theta_{0}) \subset \Theta  $ contain
both points at which
$$
f(\theta,\omega) = g(\theta,\omega) \quad \mbox{and} \quad
f(\theta,\omega) \neq g(\theta,\omega).
$$
By the assumption of the lemma, both of these cases may occur only with zero
probability. Therefore, there exists
$  \partial h(\theta,\omega)/\partial\theta |_{\theta = \theta_{0}}  $ at
all $  \theta \in \Theta  $ w.p.~1.

For the function $  h $, the proof will be completed if we show that
$  h  $ satisfies condition (ii) of Theorem~\ref{the4}. Since
$  f,g \in {\cal D} $, there are random variables $  \lambda_{1}  $ and
$  \lambda_{2}  $ with $  E \lambda_{1} < \infty  $ and
$  E\lambda_{2} < \infty  $ such that the inequalities
\begin{eqnarray*}
|f(\theta_{1},\omega) - f(\theta_{2},\omega)| & \leq &
\lambda_{1}(\omega) \|\theta_{1} - \theta_{2}\| \quad \mbox{w.p.~1} \\
|g(\theta_{1},\omega) - g(\theta_{2},\omega)| & \leq &
\lambda_{2}(\omega) \|\theta_{1} - \theta_{2}\| \quad \mbox{w.p.~1}
\end{eqnarray*}
hold for all $  \theta_{1}, \theta_{2} \in \Theta $. Let $  \omega  $ be an
arbitrary element of $  \Omega  $ at which both these inequalities hold.
Divide $  \Theta  $ into two subsets:
\begin{eqnarray*}
{\bf X}_{\omega} & = & \{\theta \in \Theta | f(\theta,\omega)
                                                   \geq g(\theta,\omega)\}, \\
{\bf Y}_{\omega} & = & \{\theta \in \Theta | f(\theta,\omega)
                                                      < g(\theta,\omega)\}.
\end{eqnarray*}
Obviously, it holds
$$
|h(\theta_{1},\omega) - h(\theta_{2},\omega)| \leq \lambda_{1}(\omega)
\| \theta_{1} - \theta_{2} \|
$$
for all $  \theta_{1}, \theta_{2} \in {\bf X}_{\omega}  $ and
$$
|h(\theta_{1},\omega) - h(\theta_{2},\omega)| \leq \lambda_{2}(\omega)
\| \theta_{1} - \theta_{2} \|
$$
for all $  \theta_{1}, \theta_{2} \in {\bf Y}_{\omega} $. Assume
$  \theta_{1} \in {\bf X}_{\omega}, \theta_{2} \in {\bf Y}_{\omega} $. If
$  h(\theta_{1},\omega) \geq h(\theta_{2},\omega) $, we deduce
\begin{eqnarray*}
\lefteqn{|h(\theta_{1},\omega) - h(\theta_{2},\omega)|} \\
& = & |f(\theta_{1},\omega) - g(\theta_{2},\omega)| \\
& < & |f(\theta_{1},\omega) - f(\theta_{2},\omega)|
\leq \lambda_{1}(\omega) \| \theta_{1} - \theta_{2} \|.
\end{eqnarray*}
Similarly, if $  h(\theta_{1},\omega) < h(\theta_{2},\omega)$, we have
$$
|h(\theta_{1},\omega) - h(\theta_{2},\omega)| \leq \lambda_{2}(\omega)
\| \theta_{1} - \theta_{2} \|.
$$
It follows that
$$
|h(\theta_{1},\omega) - h(\theta_{2},\omega)| \leq \lambda(\omega)
\|\theta_{1} - \theta_{2}\|, \quad
\lambda(\omega) = \lambda_{1}(\omega) \vee \lambda_{2}(\omega),
$$
for all $  \theta_{1}, \theta_{2} \in \Theta $. Since this inequality holds
for almost all $  \omega \in \Omega $, we conclude that
$$
|h(\theta_{1},\omega) - h(\theta_{2},\omega)| \leq
\lambda(\omega) \| \theta_{1} - \theta_{2} \| \quad \mbox{w.p.~1},
$$
and
$  E\lambda = E[\lambda_{1} \vee \lambda_{2}]
\leq E\lambda_{1} + E\lambda_{2} < \infty $.

In other words, $  h  $ satisfies the conditions of Theorem~\ref{the4}.
Consequently, $  f \vee g \in {\cal D} $. The proof of the statement
$  f \wedge g \in {\cal D} $, is analogous.
\end{proof}

It should be noted that the condition of Lemma~\ref{lem6} is not necessary, as
the next example shows.
\begin{example}\label{exa2}
{\rm
Let $  \Theta = [-1,1] $, $  (\Omega, {\cal F}, P)  $ be a probability
space, where $  \Omega = [0,1] $, $  {\cal F}  $ is the $\sigma$--field of
Borel sets of $  \Omega $, and $  P  $ is the Lebesgue measure on
$  \Omega $. Consider the following functions:
$$
f(\theta,\omega) = -\theta^{3} + \omega, \qquad
g(\theta,\omega) = \theta^{2} + \omega
$$
and
$$
h(\theta,\omega) = f(\theta,\omega) \vee g(\theta,\omega)
= \left\{
    \begin{array}{ll}
     -\theta^{3} + \omega, & \mbox{if $  -1 \leq \theta \leq 0 $}, \\
      \theta^{2} + \omega, & \mbox{if $  0 < \theta \leq 1 $}.
    \end{array}
  \right.
$$

One can easily verify that for any neighbourhood of $  \theta = 0 $, there
exist both points with $  f > g  $ and $  f < g  $ w.p.~1. The conditions
of Lemma~\ref{lem6} are therefore violated. Nevertheless, $  h  $ is
differentiable at $  0  $ for all $  \omega \in \Omega $. Moreover, it
holds that $  h \in {\cal D} $.
}
\end{example}

\begin{corollary}\label{cor7}
Let $  f,g \in {\cal D} $. If for every $  \theta \in \Theta  $ it holds
that $  f \neq g  $ w.p.~1, then $  f \vee g \in {\cal D}  $ and
$  f \wedge g \in {\cal D} $.
\end{corollary}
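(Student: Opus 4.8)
The plan is to derive Corollary~\ref{cor7} as essentially an immediate consequence of Lemma~\ref{lem6}, by verifying that the hypothesis ``$f \neq g$ w.p.~1 at every $\theta \in \Theta$'' implies the ``one and only one of (i), (ii), (iii)'' local dichotomy required by that lemma. First I would fix an arbitrary $\theta_{0} \in \Theta$. The goal is to produce, w.p.~1, a neighbourhood $\mathbf{U}_{\omega}(\theta_{0})$ on which exactly one of $f = g$, $f < g$, $f > g$ holds throughout.

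The key observation is that $f, g \in \mathcal{D}$ forces $f(\cdot,\omega)$ and $g(\cdot,\omega)$ to be continuous on $\Theta$ w.p.~1: indeed, condition~(ii) of Theorem~\ref{the4} gives, for each of $f$ and $g$, a Lipschitz bound $|f(\theta_{1},\omega) - f(\theta_{2},\omega)| \leq \lambda_{1}(\omega)\|\theta_{1}-\theta_{2}\|$ that holds w.p.~1 with a finite (hence a.s.\ finite) random constant, so each sample path is locally Lipschitz, in particular continuous. Applying the hypothesis at the single point $\theta_{0}$, we have $f(\theta_{0},\omega) \neq g(\theta_{0},\omega)$ w.p.~1, so either $f(\theta_{0},\omega) < g(\theta_{0},\omega)$ or $f(\theta_{0},\omega) > g(\theta_{0},\omega)$. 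In either case the continuity of $h_{0}(\theta) := g(\theta,\omega) - f(\theta,\omega)$ at $\theta_{0}$, together with $h_{0}(\theta_{0}) \neq 0$, yields a neighbourhood $\mathbf{U}_{\omega}(\theta_{0}) \subset \Theta$ on which $h_{0}$ keeps its sign; that is, case~(ii) or case~(iii) of Lemma~\ref{lem6} holds on the whole of $\mathbf{U}_{\omega}(\theta_{0})$, and it holds w.p.~1 since the null set where continuity or $f(\theta_{0},\cdot) \neq g(\theta_{0},\cdot)$ fails is negligible. Thus the hypothesis of Lemma~\ref{lem6} is satisfied, and the conclusions $f \vee g \in \mathcal{D}$ and $f \wedge g \in \mathcal{D}$ follow at once.

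The only subtlety worth flagging is the handling of the null sets: the Lipschitz inequalities and the relation $f(\theta_{0},\cdot) \neq g(\theta_{0},\cdot)$ each hold off a $P$-null set, so I would intersect the (countably many, or simply three) full-measure events on which all of these simultaneously hold, and work on that intersection; on it the neighbourhood $\mathbf{U}_{\omega}(\theta_{0})$ is constructed for every such $\omega$, which is exactly the ``w.p.~1'' statement Lemma~\ref{lem6} asks for. There is no genuine obstacle here --- the content is entirely in Lemma~\ref{lem6}, and the corollary merely records the most convenient checkable sufficient condition (strict inequality a.s.\ at each fixed parameter) under which the lemma's local trichotomy is automatic. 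I would keep the write-up to a few sentences: invoke local Lipschitz continuity of the sample paths from membership in $\mathcal{D}$, use $f(\theta_{0},\omega) \neq g(\theta_{0},\omega)$ w.p.~1 and sign preservation of a continuous function near a point of nonzero value to get the required neighbourhood, and cite Lemma~\ref{lem6}.
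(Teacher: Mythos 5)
Your proposal is correct and follows essentially the same route as the paper's own proof: deduce continuity of the sample paths from the Lipschitz condition in the definition of ${\cal D}$, use $f\neq g$ w.p.~1 at each fixed $\theta$ together with sign preservation of the continuous difference $g-f$ to obtain the neighbourhood required by Lemma~\ref{lem6}, and then invoke that lemma. Your extra care with the null sets is a welcome but inessential refinement of the same argument.
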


\begin{proof}
Clearly, the condition of the corollary implies that either
$  f-g > 0  $ or $  f-g < 0  $ holds at every $  \theta \in \Theta  $
w.p.~1. Since $  f,g \in {\cal D} $, these two functions are continuous
functions of $  \theta  $ w.p.~1 as well as $  f-g $. Because of
continuity, $  f-g > 0 \; (f-g < 0)  $ holds w.p.~1 not only at
$  \theta $, but also at every points of a neighbourhood of $  \theta $. It
remains to apply Lemma~\ref{lem6}.
\end{proof}

Using Corollary~\ref{cor7}, we give the following general conditions for
$  {\cal D}  $ to provide closeness with respect to the operations
$  \vee  $ and $  \wedge $.

\begin{lemma}\label{lem8}
Let $  f,g \in {\cal D}  $. If for any $  \theta \in \Theta  $ it holds
that the random variables $  f(\theta,\omega)  $ and $  g(\theta,\omega) $
\begin{list}{}{}
\item[{\rm (i)}] are independent,
\item[{\rm (ii)}] at least one of them is continuous,
\end{list}
then $  f \vee g \in {\cal D}  $ and $  f \wedge g \in {\cal D} $.
\end{lemma}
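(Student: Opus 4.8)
The plan is to derive Lemma~\ref{lem8} directly from Corollary~\ref{cor7}. That corollary already supplies the conclusion $f \vee g \in {\cal D}$ and $f \wedge g \in {\cal D}$ once we know that $f \neq g$ w.p.~1 at every $\theta \in \Theta$ (its proof uses continuity of $f,g$ in $\theta$, which is available because $f,g \in {\cal D}$, to promote this pointwise fact to the neighbourhood condition of Lemma~\ref{lem6}). So the whole task reduces to showing that, for each fixed $\theta$, the event $\{f(\theta,\omega) = g(\theta,\omega)\}$ is null.

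Fix $\theta \in \Theta$ and write $X = f(\theta,\omega)$, $Y = g(\theta,\omega)$. By hypothesis (i) these are independent, and by hypothesis (ii) at least one of them, say $X$, has a continuous distribution, i.e.\ an atomless law, so that $P(X = a) = 0$ for every $a \in R$. First I would use independence to identify the law of the pair $(X,Y)$ on $R^{2}$ with the product measure $\mu_{X} \otimes \mu_{Y}$ of the marginal laws. The diagonal $\Delta = \{(x,y) \in R^{2} : x = y\}$ is Borel, so applying Fubini's theorem to $1_{\Delta}$ gives
$$
P(X = Y) = (\mu_{X} \otimes \mu_{Y})(\Delta) = \int_{R} \mu_{X}(\{y\}) \, \mu_{Y}(dy) = \int_{R} 0 \, \mu_{Y}(dy) = 0,
$$
the last equality because $\mu_{X}$ is atomless. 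If instead $Y$ is the continuous one, the argument is the same after interchanging the roles of $f$ and $g$. Hence $f \neq g$ w.p.~1 at $\theta$, and since $\theta$ was arbitrary this holds at every $\theta \in \Theta$; Corollary~\ref{cor7} now finishes the proof.

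The only step that needs care is the measure-theoretic identity $P(X = Y) = 0$: one must fix the meaning of ``continuous random variable'' as having an atomless law (equivalently, a continuous distribution function), and note that the Fubini reduction is legitimate because $\Delta$ is Borel and the integrand is nonnegative. Beyond that, the lemma is a transparent consequence of Corollary~\ref{cor7}, so I expect no further obstacle.
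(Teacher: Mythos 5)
Your proposal is correct and follows the same route as the paper: the paper's proof of Lemma~\ref{lem8} is exactly the observation that its hypotheses imply the condition of Corollary~\ref{cor7}, i.e.\ that independence together with atomlessness of at least one marginal forces $f(\theta,\omega) \neq g(\theta,\omega)$ w.p.~1 at every $\theta$. Your Fubini computation simply supplies the measure-theoretic detail the paper leaves implicit, so there is nothing to correct.
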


To prove the lemma it is sufficient to see that its conditions lead to that of
Corollary~\ref{cor7}.

The next two examples show that both conditions of Lemma~\ref{lem8} are
essential.
\begin{example}\label{exa3}
{\rm
Let $  (\Omega, {\cal F}, P)  $ and $  \Theta  $ be defined as in
Example~\ref{exa2}. Also define
$$
f(\theta,\omega) = -\theta+\omega \quad \mbox{and} \quad
g(\theta,\omega) = \theta+\omega.
$$
Let us consider the function
$$
h(\theta,\omega) = f(\theta,\omega) \vee g(\theta,\omega)
= \left\{
   \begin{array}{ll}
    -\theta + \omega, & \mbox{if $  -1 \leq \theta \leq 0 $}, \\
     \theta + \omega, & \mbox{if $  0 < \theta \leq 1 $}.
   \end{array}
  \right.
$$
It is clear that $  f,g \in {\cal D}  $ and for every
$  \theta \in \Theta $, the random variables $  f(\theta,\omega)  $ and
$  g(\theta,\omega)  $ are continuous. Although inequality (\ref{equ9})
holds with $  \lambda = 1  $ for $  h $, this function is not
differentiable at $  \theta = 0  $ for all $  \omega \in \Omega $.
Therefore, $  h \not\in {\cal D} $.
}
\end{example}

\begin{example}\label{exa4}
{\rm
Let $  \Theta = [0,1] $, $  \Omega_{1} = \Omega_{2} = [0,1]  $ and
$  P  $ be the Lebesgue measure on
$  \Omega = \Omega_{1} \times \Omega_{2} $. Denote
$  \omega = (\omega_{1},\omega_{2})^{\top}  $ and consider the following
functions:
$$
f(\theta,\omega)
=
  \left\{
    \begin{array}{ll}
      \frac{1}{2} \theta, & \mbox{if $  \omega_{1} \leq \frac{1}{2} $}, \\
                 1, &  \mbox{if $  \omega_{1} > \frac{1}{2} $},
    \end{array}
  \right.
\qquad
g(\theta,\omega)
=
  \left\{
    \begin{array}{ll}
      \theta^{2}, & \mbox{if $  \omega_{2} \leq \frac{1}{2} $}, \\
       1, & \mbox{if $  \omega_{2} > \frac{1}{2} $},
\end{array}
\right.
$$
and
$$
h(\theta,\omega) = f(\theta,\omega) \vee g(\theta,\omega)
= \left\{
    \begin{array}{ll}
      \max \{\frac{1}{2} \theta, \theta^{2}\}, &
                              \mbox{if $  \omega_{1} \leq \frac{1}{2}  $ and
                                       $  \omega_{2} \leq \frac{1}{2} $}, \\
      1, & \mbox{otherwise}.
    \end{array}
  \right.
$$
One can see that $  f,g \in {\cal D}  $ and for every
$  \theta \in \Theta $, the random variables $  f(\theta,\omega)  $ and
$  g(\theta,\omega)  $ are independent. In addition, condition (ii) of
Theorem~\ref{the4} holds for $  h  $ with $  \lambda = 2 $. Nevertheless,
$  h = \max \{\frac{1}{2}\theta,\theta^{2}\}  $ with probability
$  \frac{1}{4} $, that is not a differentiable function at
$  \theta = \frac{1}{2} $. In that case, $  h \not\in {\cal D} $.
}
\end{example}

\begin{lemma}\label{lem9}
Let $  {\cal M}  $ be a set of functions from $  {\cal D}  $ such that for
any $  f,g \in {\cal M} $, the conditions of Lemma~\ref{lem6} are fulfilled.
Then $   {\cal M}  $ is closed for the operations $  \max  $ and
$  \min $.
\end{lemma}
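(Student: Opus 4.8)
The plan is to reduce everything to Lemma~\ref{lem6} by induction on the number of $\vee$/$\wedge$ operations used to build a function out of $\mathcal{M}$; the one substantive point that needs to be checked first is that the ``local trichotomy'' hypothesis of Lemma~\ref{lem6} is itself stable under $\vee$ and $\wedge$. So I would first isolate the following propagation claim: if $f,g,h\in\mathcal{D}$ and each of the three pairs $(f,g)$, $(f,h)$, $(g,h)$ satisfies the hypothesis of Lemma~\ref{lem6}, then so does the pair $(f\vee g,\,h)$, and likewise the pair $(f\wedge g,\,h)$.

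To prove the propagation claim, fix $\theta_{0}\in\Theta$. With probability one there are neighbourhoods $U^{fg}_{\omega}$, $U^{fh}_{\omega}$, $U^{gh}_{\omega}$ of $\theta_{0}$ on which, respectively, the pairs $(f,g)$, $(f,h)$, $(g,h)$ each obey exactly one of the relations $=$, $<$, $>$ throughout; let $U_{\omega}$ be the intersection of the three, which is again a neighbourhood of $\theta_{0}$ and on which all three relations still hold in the same definite form. On $U_{\omega}$ the order between $f$ and $g$ is fixed: either $f\geq g$ on all of $U_{\omega}$, so that $f\vee g=f$ and $f\wedge g=g$ there, or $f\leq g$ on all of $U_{\omega}$, so that $f\vee g=g$ and $f\wedge g=f$ there. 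In the first case $(f\vee g,h)$ agrees with $(f,h)$ on $U_{\omega}$ and hence obeys exactly one of $=$, $<$, $>$ there, while $(f\wedge g,h)$ agrees with $(g,h)$; in the second case the roles are swapped. Since a finite intersection of neighbourhoods is a neighbourhood, the same argument applies verbatim to finitely many functions.

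With the propagation claim in hand I would prove, by induction on the total number $m$ of $\vee$ and $\wedge$ operations, the combined statement: every function $\varphi$ built from elements of $\mathcal{M}$ using $m$ such operations lies in $\mathcal{D}$, and any two functions so built satisfy the hypothesis of Lemma~\ref{lem6}. The base case $m=0$ is exactly the hypothesis of the lemma together with $\mathcal{M}\subset\mathcal{D}$. For the inductive step, write such a $\varphi$ as $\varphi_{1}*\varphi_{2}$ with $*\in\{\vee,\wedge\}$ and $\varphi_{1},\varphi_{2}$ built with fewer operations; by the inductive hypothesis $\varphi_{1},\varphi_{2}\in\mathcal{D}$ and the pair $(\varphi_{1},\varphi_{2})$ satisfies the Lemma~\ref{lem6} hypothesis, so Lemma~\ref{lem6} yields $\varphi=\varphi_{1}*\varphi_{2}\in\mathcal{D}$. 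That $\varphi$ still pairs correctly with any other such $\psi$ follows by applying the propagation claim to $\varphi_{1},\varphi_{2},\psi$, all three needed pairs being supplied by the inductive hypothesis since they involve strictly fewer operations (with $(\psi,\psi)$ handled trivially via relation~(i)). Letting $m$ range over all nonnegative integers shows that the closure of $\mathcal{M}$ under $\max$ and $\min$ is contained in $\mathcal{D}$, which is the assertion of the lemma.

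The step I expect to need the most care is the bookkeeping of the induction, rather than any analytic argument: one must carry along the pairwise Lemma~\ref{lem6} property — not merely membership in $\mathcal{D}$ — as the inductive hypothesis, and verify that every pair fed into the propagation claim genuinely involves fewer operations than $(\varphi,\psi)$ so that the induction is well founded. All of the measure-theoretic and differentiability content is already packaged in Lemma~\ref{lem6} and is not revisited.
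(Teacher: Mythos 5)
Your proposal is correct and takes essentially the same route as the paper: your propagation claim — intersect the neighbourhoods supplied by the pairwise hypothesis and note that on the intersection $f\vee g$ (resp.\ $f\wedge g$) coincides with $f$ or with $g$, so the local trichotomy for $(f\vee g,u)$ is inherited from $(f,u)$ or $(g,u)$ — is exactly the case analysis in the paper's proof of Lemma~\ref{lem9}. The explicit induction on the number of $\vee/\wedge$ operations that you add (which, as you indicate, should be run on the total operation count of the pair to be well founded) is precisely the bookkeeping the paper leaves implicit by concluding $f\vee g\in{\cal M}$ and deferring the iterated statement to Corollary~\ref{cor10}.
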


\begin{proof}
Let $  f,g \in {\cal M}  $ and let us define
$  h =  f \vee g $. Note that $  h \in {\cal D}  $ by Lemma~\ref{lem6}. We
have to prove the conditions of Lemma~\ref{lem6} are satisfied for $  h  $
and any $  u \in {\cal M} $.

If $  u  $ is either $  f  $ or $  g $, say $  u \equiv f $, we may
write
$$
h-u = f \vee g - f
= \left\{
    \begin{array}{ll}
       g - f, & \mbox{if $  f < g $}, \\
       0, & \mbox{if $  f \geq g $}.
    \end{array}
  \right.
$$
Since $  f,g \in {\cal M} $, for any point of $  \Theta $, there is a
neighbourhood on which only one of the conditions $  f-g < 0 $,
$  f-g = 0 $, or $  f-g > 0  $ holds w.p.~1. From the above identity this
also holds for $  h-u  $ on the neighbourhood. Consequently, in this case
the conditions of Lemma~\ref{lem6} are fulfilled.

Now we assume $  u \in {\cal M} \setminus \{f,g\} $. We have
$$
h-u = f \vee g - u
= \left\{
    \begin{array}{ll}
      g - f, & \mbox{if $  f < g $}, \\
      f - u, & \mbox{if $  f \geq g $}.
    \end{array}
  \right.
$$

Let us examine any $  \theta \in \Theta $. Suppose that $  f < g  $ w.p.~1
at $  \theta $. Since $  f,g $, and $  u  $ belong to $  {\cal M} $,
there are neighborhoods $  {\bf U}_{\omega}(\theta)  $ and
$  {\bf V}_{\omega}(\theta)  $ where the conditions of Lemma~\ref{lem6} are
fulfilled for each pairs of functions $  (f,g)  $ and $  (g,u) $,
respectively. It follows from the above expression that the neighborhood
$  {\bf U}_{\omega} \bigcap {\bf V}_{\omega}(\theta)  $ is that
Lemma~\ref{lem6} requires for $  h  $ and $  u $. If it holds that
$  f \geq g  $ or $  f=g  $ at $  \theta $, the reasoning is the same.

In short, we have shown that the conditions of Lemma~\ref{lem6} are satisfied
for $  h  $ and any $  u \in {\cal M}  $ and, therefore,
$  h = f \vee g \in {\cal M} $. In the case of minimum, the proof is
analogous.
\end{proof}

\begin{corollary}\label{cor10}
If $  f_{j} \in {\cal M}  $ for every $  j = 1,\ldots,N $, then it holds
$$
\bigvee_{i \in I} \bigwedge_{j \in J_{i}} f_{j} \in {\cal M},
$$
where $  I  $ is a finite set of indices and
$  J_{i} \subset \{1,\ldots,N\}  $ for every $  i \in I $.
\end{corollary}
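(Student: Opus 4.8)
The plan is to reduce everything to Lemma~\ref{lem9}, which already establishes closure of $\mathcal{M}$ under the binary operations $\vee$ and $\wedge$, and then bootstrap to the finite nested expression by a straightforward double induction on the sizes of the index sets.

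First I would show that $\mathcal{M}$ is closed under iterated maximum (and, symmetrically, iterated minimum) over any finite family of its members. For a one-element family the claim is trivial. Assuming closure for families of size $m-1$, and given $g_{1},\ldots,g_{m} \in \mathcal{M}$, I would write $\bigvee_{k=1}^{m} g_{k} = \left(\bigvee_{k=1}^{m-1} g_{k}\right) \vee g_{m}$. By the induction hypothesis the first operand lies in $\mathcal{M}$, and then Lemma~\ref{lem9} gives that the whole expression lies in $\mathcal{M}$. Replacing $\vee$ by $\wedge$ throughout yields closure under iterated minimum. The key point here is that Lemma~\ref{lem9} asserts closure of $\mathcal{M}$ itself, so the output of one binary operation is again a legitimate input to the next.

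With this in hand the corollary is immediate. For each fixed $i \in I$ the index set $J_{i} \subset \{1,\ldots,N\}$ is finite, so $h_{i} := \bigwedge_{j \in J_{i}} f_{j} \in \mathcal{M}$ by closure under iterated minimum. The collection $\{h_{i} : i \in I\}$ is then a finite subfamily of $\mathcal{M}$, and closure under iterated maximum gives $\bigvee_{i \in I} h_{i} = \bigvee_{i \in I} \bigwedge_{j \in J_{i}} f_{j} \in \mathcal{M}$, which is exactly the assertion.

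The only delicate point — and the nearest thing to an obstacle — is to be clear about what Lemma~\ref{lem9} provides: it does not merely say that $f \vee g$ is differentiable w.p.~1, but that $f \vee g \in \mathcal{M}$, i.e.\ that it satisfies the hypotheses of Lemma~\ref{lem6} when paired with \emph{every} other member of $\mathcal{M}$. That is precisely what legitimizes the iterative construction without re-verifying any local comparison conditions at each step. Degenerate cases (an empty $\mathcal{M}$, or an empty $J_{i}$) are excluded by the standing assumption that the index sets $I$ and $J_{i}$ are nonempty, so no separate treatment of $\pm\infty$ is needed.
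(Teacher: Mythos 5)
Your proposal is correct and matches the paper's approach: the paper simply declares the corollary an immediate consequence of Lemma~\ref{lem9}, and your double induction merely makes explicit the routine iteration of that lemma's binary closure, including the key observation that the intermediate results remain in $\mathcal{M}$ and so can be fed back in.
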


This is an immediate consequence of the previous lemma.

The next example is of importance to the main result of the section.

\begin{example}\label{exa5}
{\rm
Let $  f_{j} \in {\cal D}  $ for all $  j=1,\ldots,N $. Suppose that at
every $  \theta \in \Theta $, all the random variables
$  f_{j}(\theta,\omega)  $ are continuous and independent. Define
$  {\cal L}  $ to be a set of linear combinations
$  \sum_{i \in I} a_{i} f_{i}  $ with integer coefficients $  a_{i} $,
$  i \in I \subset \{1,\ldots,N\} $. Obviously, $  {\cal L}  $ is stable
for addition. For all functions $  u = \sum_{i \in I} a_{i} f_{i}  $ and
$  v = \sum_{j \in J} b_{j} f_{j} $, we have
$  u-v = \sum_{k \in K} c_{k} f_{k} $. It is clear that for every
$  \theta \in \Theta $, $  u-v  $ is a continuous random variable because
of the properties of $  f  $ (except for the case of all $  c_{k} = 0  $
which is obvious). Therefore, it holds that $  u-v \neq 0  $ w.p.~1 at every
$  \theta \in \Theta $. Similarly as in Corollary~\ref{cor7}, one can deduce
that $  u  $ and $  v  $ satisfy the conditions of Lemma~\ref{lem6}. From
this we conclude that $  {\cal L}  $ may be treated as an example of
$  {\cal M} $.
}
\end{example}

One can easily see that the condition of continuity is essential to this
reasoning. To illustrate the important role of independence, consider the
following functions
$$
f(\theta,\omega) = -2\theta+2\omega, \quad
g(\theta,\omega) = \theta-\omega, \quad \mbox{and} \quad
u(\theta,\omega) = \theta+\omega,
$$
under the same assumption as in Example~\ref{exa3}. It is easy to verify that
the conditions of Lemma~\ref{lem6} are fulfilled for any two functions of
them. Nevertheless, the functions $  u  $ and $  v=f+g  $ do not satisfy
the conditions, as Example~\ref{exa3} has shown.

Now, we may formulate the main result of the section. We first introduce some
definitions. Let $  {\cal A}  $ be the algebra of all functions
$  f: \Theta \times \Omega \longrightarrow R  $ being defined on the
probability space $  (\Omega, {\cal F}, P)  $ at every
$  \theta \in \Theta  $ with the operations $  \vee $, $  \wedge $, and
$  + $. In other words, this is a closed system of the functions for these
operations.

\begin{definition}
Let $  {\bf T}  $ be a finite subset of functions of $  {\cal A} $. We
define $  [{\bf T}]_{\cal A}  $ to be the set generated by $  {\bf T}  $
in $  {\cal A} $, that is the set of all functions being obtained from ones
of $  {\bf T}  $ by means of the operations $  \vee $, $  \wedge $, and
$  + $.
\end{definition}

\begin{theorem}\label{the11}
Let $  {\bf T} \in {\cal D} $. Suppose that for all $  \tau \in {\bf T} $,
$  \tau(\theta,\omega)  $ are continuous and independent random variables at
any $  \theta \in \Theta $.

Then it holds $  [{\bf T}]_{\cal A} \subset {\cal D} $.
\end{theorem}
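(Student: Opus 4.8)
The plan is to reduce an arbitrary member of $[\mathbf{T}]_{\cal A}$ to the normal form furnished by Lemma~\ref{lem3} and then feed that form into the machinery of Corollary~\ref{cor10}. First I would fix $f\in[\mathbf{T}]_{\cal A}$. By definition $f$ is built from finitely many elements of $\mathbf{T}$ using $\vee$, $\wedge$, and $+$, so $f$ is a composition of these three operations in the variables $\tau\in\mathbf{T}$ (with possibly repeated occurrences). Since the operations $\vee,\wedge,+$ on random variables satisfy Axioms~\ref{axi1}--\ref{axi3} pointwise, Lemma~\ref{lem3} applies to the expression for $f$: regarding the leaves of its expression tree as formal variables $x_{1},\dots,x_{p}$, one obtains
$$
f = \bigvee_{i\in I}\ \bigwedge_{j\in J_{i}}\ \sum_{k=1}^{p}\alpha_{ij}^{k}x_{k}
$$
with integer $\alpha_{ij}^{k}$ and finite index sets. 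Substituting each leaf by the random variable it denotes and collecting like terms, every inner sum $g_{ij}:=\sum_{k}\alpha_{ij}^{k}x_{k}$ becomes an integer linear combination of the (distinct) elements of $\mathbf{T}$.

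Next I would introduce $\cal L$, the set of all integer linear combinations of the members of $\mathbf{T}$, exactly as in Example~\ref{exa5}. Since each $\tau\in\mathbf{T}$ lies in $\cal D$ and integers are bounded constants, Lemma~\ref{lem5}(i),(ii) give ${\cal L}\subset{\cal D}$. By hypothesis the $\tau\in\mathbf{T}$ are continuous and independent at every $\theta\in\Theta$, so the argument of Example~\ref{exa5} shows that any difference $u-v$ of members of $\cal L$ is either identically zero or, at each $\theta$, a continuous random variable; hence $u\neq v$ w.p.~1 at every $\theta$ whenever $u\not\equiv v$, and by Corollary~\ref{cor7} (through Lemma~\ref{lem6}) any two members of $\cal L$ satisfy the hypotheses of Lemma~\ref{lem6}. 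Thus $\cal L$ is admissible as the set $\cal M$ of Lemma~\ref{lem9}.

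Finally, each $g_{ij}$ lies in $\cal L$, so Corollary~\ref{cor10} applied with ${\cal M}={\cal L}$ yields $f=\bigvee_{i\in I}\bigwedge_{j\in J_{i}}g_{ij}\in{\cal L}\subset{\cal D}$. Since $f$ was an arbitrary element of $[\mathbf{T}]_{\cal A}$, this gives $[\mathbf{T}]_{\cal A}\subset{\cal D}$.

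The step I expect to need the most care is the reduction via Lemma~\ref{lem3} when some variable occurs several times in $f$: Lemma~\ref{lem3} is proved after renaming repeated occurrences to fresh symbols, so one must check that substituting the genuine random variables back (which re-identifies those symbols, and thereby destroys their formal independence) does not invalidate the normal form — it does not, because that identity is purely algebraic and survives any substitution — and that collecting terms keeps every inner sum inside the integer span $\cal L$ of $\mathbf{T}$, which is precisely what lets Example~\ref{exa5} and Corollary~\ref{cor10} take over. A secondary point worth spelling out is the claim, used inside Example~\ref{exa5}, that a nonzero integer linear combination of independent continuous random variables is again continuous; this is what keeps the whole pipeline running.
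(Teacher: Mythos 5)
Your proposal is correct and takes essentially the same route as the paper: reduce $f\in[{\bf T}]_{\cal A}$ to the normal form of Lemma~\ref{lem3}, invoke Example~\ref{exa5} to see that the integer linear combinations of the elements of ${\bf T}$ form an admissible family ${\cal M}$ for Lemma~\ref{lem9}, and finish with Corollary~\ref{cor10}. The extra points you flag (re-identifying repeated variables after Lemma~\ref{lem3}, membership of ${\cal L}$ in ${\cal D}$ via Lemma~\ref{lem5}, and continuity of a nonzero integer combination of independent continuous random variables) merely make explicit what the paper leaves implicit.
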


\begin{proof}
It results from Lemma~\ref{lem3} that every
$  f \in [{\bf T}]_{\cal A}  $ can be represented as
$$
f = \bigvee_{i \in I} \bigwedge_{j \in J_{i}} \sum_{\tau \in {\bf T}}
a_{ij}^{\tau} \tau,
$$
where all $  a_{ij}^{\tau}  $ are integers. It has been shown in
Example~\ref{exa5} that the functions of the family
$  \{ \sum_{\tau \in {\bf T}} a_{k}^{\tau} \tau \}_{k=1,2,\ldots}  $ satisfy
the conditions of Lemma~\ref{lem6}. Applying Corollary~\ref{cor10}, we
conclude that the statement of the theorem is true.
\end{proof}

It is important to note that the conditions of Theorem~\ref{the11} are rather
general and usually fulfilled in the network simulation. In particular, in
contrast with the traditional approaches (cf., for example, existing results
on the unbiasedness of IPA estimates in \cite{HoYC87,Suri89}), we may not
restrict ourselves to the exponential distribution.

In short, to satisfy the theorem only the following are required for the
functions of the set $  {\bf T} $:
\begin{list}{}{}
\item[(i)] for any $  \theta \in \Theta $, all $  \tau \in {\bf T}  $ are
continuous and independent random variables;
\item[(ii)] each $  \tau \in {\bf T}  $ as a function of $  \theta $ is
differentiable w.p.~1 and Lipschitz one with an integrable random variable as
a Lipschitz constant.
\end{list}

In the next section we will show how these results can be applied to some
problems to verify the unbiasedness of gradient estimates.

\section{Applications}\label{sec5}

Now we discuss the applications of the previous results to optimizing the
networks. In particular, we describe algorithms of obtaining sample gradients,
based on the algebraic representation of the networks. In this section we keep
using the notations $  (\Omega, {\cal F}, P)  $ and $  \Theta  $ for the
underlying probability space and the parameter space, respectively.

We begin with the stochastic activity network. Let the duration of the $j$th
activity be represented by the function $  \tau_{j} (\theta,\omega) $. Denote
the set of all such functions of the network by $  {\bf T} $. As we have
seen, a sample completion time of the network $  t(\theta,\omega)  $ may be
expressed by functions of $  {\bf T}  $ by using only the operations
$  \max  $ and $  + $. This implies $  t \in [{\bf T}]_{\cal A} $.

Suppose that $  {\bf T} \in {\cal D} $, and all $  \tau \in {\bf T}  $ are
continuous and independent random variables at every $  \theta \in \Theta $.
For the mean completion time $  T(\theta) = E[t(\theta,\omega)] $, it follows
from Theorem~\ref{the11} that
$  (1/N) \sum_{i=1}^{N} \partial t(\theta,\omega_{i})/\partial\theta $, where
$  \omega_{i} \in \Omega $, is an unbiased estimate of the gradient
$  \partial T(\theta)/\partial\theta $.

As an example, suppose $  \tau(\theta,\omega) = -\theta\ln(1-\omega) $, where
$  \theta \in R  $ and the random variable $  \omega  $ is uniformly
distributed on $  [0,1] $. It is well known \cite{Erma75} that
$  -\ln(1-\omega)  $ has an exponential distribution with mean 1. Similarly
as in Example~\ref{exa1}, we have $  \tau \in {\cal D} $. In addition,
durations of the activities are normally considered as independent in the
probabilistic sense. Our results are therefore applicable in this case.

Now suppose that there is a simulation procedure for the activity network with
$  L  $ nodes to provide a simulation experiment for any fixed
$  \theta \in \Theta  $ and a realization of $  \omega $. One can easily
combine it with the following algorithm.

\paragraph*{Algorithm 1}
\nopagebreak[4]
\begin{list}{}{}
\item[Step (i).] At the initial time, fix values of $  \theta  $ and
$  \omega $; set $  g_{j} = 0  $ for $  j=1,\ldots,L $, and set
$  c = 0 $.
\item[Step (ii).] Upon the completion of any activity $  i $, add the value
of $  \partial \tau_{i}(\theta,\omega)/\partial\theta  $ to $  g_{i}  $
and add $  1  $ to $  c $; if $  c = L $, then save $  g_{i}  $ as the
value of $  \partial t(\theta,\omega)/\partial\theta  $ and stop; otherwise
go to Step (iii).
\item[Step (iii).] Determine the set $ {\bf N}_{D}(i) $. For every
$  j \in {\bf N}_{D}(i) $, if all activities of the set
$  {\bf N}_{F}(j)  $ have been completed, then set $  g_{j} = g_{i} $.
\end{list}

To verify the correctness of Algorithm~1, it suffices to see that it is
simply based on recursive equation (\ref{equ1}).

For a reliability network, one can apply Theorem~\ref{the11} in a similar way.
As in Section~\ref{sec2}, denote the sample lifetime of a system by
$  t(\theta, \omega)  $. It is not difficult to construct the next algorithm
that calculates the sample gradient
$  \partial t(\theta,\omega)/\partial\theta $.

\paragraph*{Algorithm 2}
\begin{list}{}{}
\item[Step (i).] At the initial time, fix values of $  \theta  $ and
$  \omega $.
\item[Step (ii).] Upon the failure of element $  i $, exclude all nodes
representing the elements that are now not able to keep working from the set
$  {\bf N}  $ as well as the corresponding arcs from the set $  {\bf A} $.
\item[Step (iii).] If for the reduced set $  {\bf N}  $ it holds
$  {\bf N} \cap {\bf N}_{E} = \emptyset $, then save
$  \partial \tau_{i}(\theta,\omega)/\partial\theta  $ as the value of
$  \partial t(\theta,\omega)/\partial\theta  $ and stop; otherwise go to
Step (ii).
\end{list}

Finally, we consider the queueing network which is a rather complicated model.
Applying Theorem~\ref{the11} to a network with a deterministic routing
mechanism, we may conclude that
\begin{list}{}{}
\item[(i)] if $  {\bf T} \in {\cal D} $, and for all $  \tau \in {\bf T} $,
$  \tau(\theta,\omega)  $ are continuous and independent random variables
for every $  \theta \in \Theta $, then the estimate (\ref{equ7}) is unbiased
for both the expected average total time $  T  $ and the expected average
waiting time $  W $;
\item[(ii)] if in addition to previous assumptions, for all
$  \tau_{1},\tau_{2} \in T $, condition (iv) of Lemma~\ref{lem5} is
fulfilled, then the estimate (\ref{equ7}) is unbiased for the expected average
utilization $  U $, the expected average number of customers $  C  $ and
the expected average queue length $  Q $.
\end{list}

In the case of the stochastic routing mechanism with a random routing table,
the above conclusions still hold true. This follows from representation
(\ref{equ4}) and Lemma~\ref{lem5} (ii) because of the boundedness of the
indicator random variable.

In order to construct a useful algorithm of calculating a sample performance
function gradient, we first consider the identities at (\ref{equ3}) in the
form
$$
\delta_{ij}(\theta,\omega)
= \max \{\alpha_{ij}(\theta,\omega),\delta_{ij-1}(\theta,\omega) \}
+ \tau_{ij}(\theta,\omega).
$$
Differentiating the function $  \delta_{ij}(\theta,\omega)  $ for a fixed
$  \omega \in \Omega  $ at $  \theta $, we have
$$
\frac{\partial}{\partial \theta} \delta_{ij}(\theta,\omega)
= \left\{
   \begin{array}{ll}
    \frac{\partial}{\partial\theta} \delta_{ij-1} (\theta,\omega)
  + \frac{\partial}{\partial\theta} \tau_{ij} (\theta,\omega), &
\mbox{if $  \alpha_{ij}(\theta,\omega) < \delta_{ij-1}(\theta,\omega) $}, \\
    \frac{\partial}{\partial\theta} \alpha_{ij} (\theta,\omega)
  + \frac{\partial}{\partial\theta} \tau_{ij} (\theta,\omega), &
\mbox{if $  \alpha_{ij}(\theta,\omega) > \delta_{ij-1}(\theta,\omega) $}.
   \end{array}
  \right.
$$
The inequalities in the conditions of the right-hand side mean the following.
The inequality $  \alpha_{ij} > \delta_{ij-1}  $ implies that at the arrival
of the $j$th customer into node $i$, the service of the previous one, the
$(j-1)$th, has been completed. Therefore, at that time the server is free. The
meaning of the contrary inequality is that the server is busy at the arrival
of the $j$th customer.

In the first case, the value of
$  \partial \delta_{ij}(\theta,\omega)/\partial\theta $ is defined by
$  \partial \alpha_{ij}(\theta,\omega)/\partial\theta $. Note that the
function $  \alpha_{ij}(\theta,\omega) \equiv \delta_{km}(\theta,\omega) $,
where $  \delta_{km}(\theta,\omega)  $ represents some $m$th completion time
at a node $ k $. It is the service completion of the customer that is the
$j$th to arrive into node $i$. In other words, in that case one have to add
the value of $  \partial \delta_{km} (\theta,\omega)/\partial\theta  $ to
$  \partial \tau_{ij}(\theta,\omega)/\partial\theta  $ to obtain
$  \partial \delta_{ij}(\theta,\omega)/\partial\theta $.

If it holds $  \alpha_{ij} < \delta_{ij-1} $, then the value of
$  \partial \delta_{ij}(\theta,\omega)/\partial\theta  $ is calculated by
addition $  \partial \delta_{ij-1}(\theta,\omega)/\partial\theta  $ and
$  \partial \tau_{ij}(\theta,\omega)/\partial\theta $.

It results from Section~\ref{sec4} that if for any $  \theta \in \Theta $,
all $  \tau \in {\bf T}  $ are continuous and independent random variables,
then $  \alpha_{ij}(\theta,\omega) \neq \delta_{ij-1}(\theta,\omega)  $
w.p.~1. Therefore, the above expression defines the sample gradient w.p.~1.

Now, we consider an algorithm that provide the value of
$  \partial \delta_{KM}(\theta,\omega)/\partial\theta  $ for fixed
$  \theta \in \Theta \subset R $, $  \omega \in \Omega $,
$  K \in \{1, \ldots, L\} $, and $  M \in \{1, 2, \ldots \} $. We suppose
that there is a simulation procedure into which the algorithm may be
incorporated.

\paragraph*{Algorithm 3}
\begin{list}{}{}
\item[Step (i).] At the initial time, fix values of $  \theta  $ and
$  \omega $; set $  g_{j} = 0  $ for $  j=1,\ldots,L $.
\item[Step (ii).] Upon the $j$th completion at node $  i $, add the value of
$  \partial \tau_{ij}(\theta,\omega)/\partial\theta  $ to  $  g_{i} $; if
both $  i = K $, and $  j = M $, then save $  g_{K}  $ as the value of
$  \partial \delta_{KM}(\theta,\omega)/\partial\theta  $ and stop;
otherwise go to Step (iii).
\item[Step (iii).] Determine the next node $  r = \sigma_{ij}(\omega)  $ to
be visited by the customer; if the server of node $  r  $ is free, then set
$  g_{r} = g_{i} $.
\end{list}

Note that in the case of a vector of parameters, $  \Theta \subset R^{n} $,
the algorithm is analogous. It only needs to change $  g_{i}  $ for the
vector $  \mbox{\boldmath $g$}_{i} = (g_{i1},\ldots,g_{in} )^{\top}  $ and
to treat the arithmetic operations as the vector ones.

It is easy to see that the algorithm of evaluating
$  \delta_{ij}(\theta,\omega)/\partial\theta  $ plays the key role in
calculating gradients of sample performance functions of the network. It is
included as the main part in other algorithms. To illustrate this, we consider
an algorithm for the sample function
$  u(\theta,\omega) = \sum_{j=1}^{M} \tau_{Kj}(\theta,\omega)
/\delta_{KM}(\theta,\omega) $, the average utilization per unit time.

\paragraph*{Algorithm 4}
\begin{list}{}{}
\item[Step (i).] At the initial time, fix values of $  \theta  $ and
$  \omega $; set $  t,d = 0 $, and $  g_{j} = 0  $ for $  j=1,\ldots,L $.
\item[Step (ii).] Upon the $j$th completion at node $  i $, add the value of
$  \partial \tau_{ij}(\theta,\omega)/\partial\theta  $ to  $  g_{i} $; if
$  i = K $, then add the value of
$  \partial \tau_{ij}(\theta,\omega)/\partial\theta  $ to $  d $, and add
the value of $  \tau_{Kj}(\theta,\omega)  $ to $  t $; if both $  i = K $,
and $  j = M $, then set $  h = \delta_{KM}(\theta,\omega)  $ and stop;
otherwise go to Step (iii).
\item[Step (iii).] Determine the next node $  r = \sigma_{ij}(\omega)  $ to
be visited by the customer; if the server at node $  r  $ is free, then set
$  g_{r} = g_{i} $.
\end{list}

Upon the completion of the algorithm we get $  (dh - tg_{K})/h^{2}  $ as the
value of $  \partial u(\theta,\omega)/\partial\theta $.

It is easy to see that Algorithms~3 and 4 are quite similar to those of
IPA method in \cite{Suri89}.

In conclusion, note that the algorithms are rather simple. In fact, they only
require calculating gradients of given functions and performing some trivial
operations to produce values of the sample gradients. Using these values, one
can easily estimate the gradients of network performance measures so as to
apply efficient optimization procedures.

\subsection*{Acknowledgements}
The author is grateful to Prof. A.A.~Zhigljavsky who was his PhD supervisor
and Prof. S.M.~Ermakov for help and valuable discussions.

\bibliography{Unbiased_Estimates_for_Gradients_of_Stochastic_Network_Performance_Measures}

\end{document}